\newcommand{\R}{{\mathbb R}}
\newcommand{\C}{{\mathbb C}}
\newcommand{\N}{{\mathbb N}}
\def\norm#1{\left\|#1\right\|}
\def\inpro#1{\left\langle #1 \right\rangle}
\def\Set#1{\left\{#1\right\}}
\def\gp#1{\left(#1\right)}
\def\bk#1{\left[#1\right]}
\def\calS{{\mathcal S}}
\def\calN{{\mathcal N}}
\def\calI{{\mathcal I}}
\def\bn{\begin{enumerate}}  
\def\en{\end{enumerate}}    
\def\bmt{\begin{matrix}} 
\def\emt{\end{matrix}}
\newtheorem{mythm}{Theorem}[section]
\newtheorem{theorem}{Theorem}[section]
\newtheorem{definition}[mythm]{Definition}
\newtheorem{example}[mythm]{Example}
\newtheorem{proposition}[mythm]{Proposition}
\newtheorem{lemma}[mythm]{Lemma}
\theoremstyle{remark}
\newtheorem{remark}[mythm]{Remark}
\newtheorem{observation}[mythm]{Observation}
\newcommand{\ds}{\displaystyle}
\newcommand{\f}{\{f_i\}_{i=1}^k}
\newcommand{\F}{\mathcal{F}}
\newcommand{\Hn}{\mathcal{H}_n}
\newcommand{\n}[1]{\{1,2,\ldots,#1\}}
\newcommand{\rob}[1]{\operatorname{rob}(#1)}
\newcommand{\Span}[1]{\operatorname{span}\left(#1\right)}
\begin{document}

\title{Maximum Robustness and  Surgery of Frames in finite dimensions$^*$}

\author{Martin S. Copenhaver}
\address{School of Mathematics, Georgia Institute of Technology}
\email{copenhaver@gatech.edu}

\author{Yeon Hyang Kim}
\address{Department of Mathematics, Central Michigan University}
\email{kim4y@cmich.edu}

\author{Cortney Logan}
\address{Department of Mathematics, Stonehill College}
\email{logan.cort@gmail.com}

\author{Kyanne Mayfield}
\address{Department of Mathematics, University of Portland}
\email{mayfield13@up.edu}

\author{Sivaram K. Narayan}
\address{Department of Mathematics, Central Michigan University}
\email{naray1sk@cmich.edu}

\author{Jonathan Sheperd}
\address{Department of Mathematics, University of Notre Dame}
\email{jsheperd@nd.edu}

\thanks{*Reseach supported  by NSF-REU Grant DMS 08-51321. This work was done as a part of the REU program in  Summer 2011.}

\subjclass[2010]{Primary 42C15, 05B20, 15A03}

\date{February 7, 2013.}

\keywords{Frames, Tight frames,   Diagram vectors,  Robustness, Erasures, Redundancy, Gramian operator, Subframes, Surgery, Length surgery }

\begin{abstract}
We consider frames in a finite-dimensional Hilbert space $\Hn$ where frames are exactly the spanning sets of the vector space. 
We present a method to determine the maximum robustness of a frame. We present results on tight subframes and surgery of frames.  We also answer the  question of when length surgery resulting in a tight frame set for $\Hn$ is possible. 
\end{abstract}

\maketitle

\section{Introduction}\label{intro}
A basis $\f$ of vectors in a finite-dimensional inner product space \( \Hn \) can be used to represent every vector  
\( f  \) as a  linear combination of the elements in  $\f$:
\[  f = \sum_i a_i f_i. \]
This representation gives us characteristics of \(f\)  in terms of the coefficients \(a_i\). 
However, uniqueness of this representation is not always an advantage. For example, in applications such as image and signal processing, the  loss of a single coefficient during  data transmission will prevent the recovery of the original signal. 

A frame is a generalization of a basis that includes  redundancy. That is, a frame in  finite dimensions is a redundant set of vectors that span a finite-dimensional  vector space.   This redundancy yields robustness, which makes frame representations less sensitive to transmission errors. 
The study of frames began in 1952 with their introduction by Duffin and Schaeffer \cite{duffin} and has since been expanded by Daubechies \cite{daubechies} and others  \cite{BF03, classes, erasures, RS95}.   

In \cite{narayan}, the authors characterize frames in $\R^n$ that are robust to $k$ erasures and a necessary and sufficient condition is given for performing $(r, k)$-surgery on unit-norm tight frames in $\R^2$. 
In this paper, we present a method to  determine the maximum robustness of any given frame, and generalize  results on surgery from \cite{narayan}.  We also answer the  question of when length surgery resulting in a tight frame set for $\Hn$ is possible. 
We begin by defining various notions that are mentioned above.  
A good introduction to frames in finite dimensions can be found in \cite{ffu, nguyen}.

Let $I$ be a subset of $\N$ and $\Hn$  be an $n$-dimensional real or
complex Hilbert space.  
A \emph{frame} in $\Hn$ 
 is a sequence of vectors $\{f_i\}_{i\in I}$ for which there exist constants $0<A\leq B<\infty$ such that for all $f \in\Hn$,
\[A\| f \|^2\leq\displaystyle\sum_{i\in I}|\langle f,f_i \rangle|^2 \leq B\|f\|^2.\]
When $A=B=\lambda$, $\{f_i\}_{i\in I}$ is called a \emph{$\lambda-$tight frame}.  When  $\lambda=1$,  the frame is called a \emph{Parseval frame}. A \emph{unit-norm frame} is a frame such that each vector in the frame has norm one. In a finite-dimensional Hilbert space $\Hn$, a sequence of vectors is a frame if and only if it spans  $\Hn$.

Given a sequence of vectors  $\Set{f_i}_{i=1}^k$ in $\Hn$,  we define the \emph{analysis operator} to be the linear map $\theta: \Hn \rightarrow \ell^2(\Set{1, \cdots, k})$ defined by $(\theta f)(i)=\langle f, f_i \rangle$. The adjoint $\theta^*$ such that $\theta^*\,:\, \ell^2(\Set{1, \cdots, k})\rightarrow \Hn$ is called the \emph{synthesis operator}. In $\Hn$, the analysis operator associated with a sequence of vectors $\f$ can be written  with respect to a basis as  the $k\times n$ matrix
\[
\theta = \left[
\begin{array}{c c c }
\leftarrow & f_1^*& \rightarrow \\
& \vdots & \\
\leftarrow & f_k^*& \rightarrow 
\end{array}\right]
,\]
and the synthesis operator as the $n\times k$ matrix
\[\theta^* = \left[
\begin{array}{c c c }
\uparrow & & \uparrow \\
f_1 & \cdots & f_k\\
\downarrow & & \downarrow
\end{array}\right].
\]

The \emph{frame operator} $S$ of a sequence of vectors $\Set{f_i}_{i=1}^k$ (not necessarily a frame) is defined as $\theta^*\theta$. For all $f\in\Hn$,
\[
Sf=\theta^*\theta f= \displaystyle\sum_{i =1}^k \langle f,f_i\rangle f_i
.\]
For $\f\subseteq  \Hn$, the \emph{Gramian operator} $G$ is the $k\times k$ matrix defined by
\[
G=\theta\theta^*=(\langle f_j, f_i \rangle)_{i,j=1}^k.
\]
Given a sequence of vectors $\f$ in $\Hn$, it is known that the frame operator $S$ of the sequence has rank $n$ if and only if the sequence is a frame. The frame operator  $S= \lambda I_n$ if and only if $\f$ is a $\lambda-$tight frame. Moreover, $S = I_n$ if and only if $\f$ is a Parseval frame \cite{ffu}.

\section{Robustness}

In this section, we present a method to determine the maximum robustness of any given frame.  

\begin{definition}\label{Jdefn:robust}
A frame \(\F := \{f_i\}_{i=1}^k\) for \(\Hn\) is said to be \emph{robust to \(r\) erasures} if \(\{f_i\}_{i\in I^C}\) is still a frame for any index set \(I\subseteq \{1,2,\ldots,k\}\) with \(|I|=r\). 
The \emph{maximum robustness} of a frame \(\mathcal{F}\) for \(\mathcal{H}_n\), which we denote \(\operatorname{rob}(\mathcal{F})\), is defined to be the maximum number \(r\) such that \(\mathcal{F}\) is robust to \(r\) erasures.
\end{definition}

\begin{observation} \label{upper}
From the definition it follows that \(\text{rob}(\F) \le k-n. \) 
In the case when $\F$ is a full spark frame, i.e., every set of $n$ vectors in the frame $\F$ form a basis for $\Hn$ \cite{ACM12},  we have 
$\rob{\F} = k-n . $
\end{observation}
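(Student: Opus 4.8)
The plan is to prove the two assertions separately, using only the finite-dimensional characterization recalled in Section~\ref{intro}: a sequence of vectors in $\Hn$ is a frame if and only if it spans $\Hn$, and in particular any frame for $\Hn$ must contain at least $n$ vectors.

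First I would establish the upper bound $\rob{\F}\le k-n$. Suppose, toward a contradiction, that $\F$ is robust to $r = k-n+1$ erasures. Pick any index set $I\subseteq\{1,2,\ldots,k\}$ with $|I| = r$; then $|I^C| = k - r = n-1 < n$, so the subfamily $\{f_i\}_{i\in I^C}$ consists of fewer than $n$ vectors and therefore cannot span the $n$-dimensional space $\Hn$. Hence $\{f_i\}_{i\in I^C}$ is not a frame, contradicting robustness to $r$ erasures. Therefore no value $r > k-n$ is admissible, and $\rob{\F}\le k-n$.

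Next I would handle the full spark case. By the first part it suffices to show that a full spark frame is robust to exactly $k-n$ erasures; equality $\rob{\F} = k-n$ then follows immediately. Let $I\subseteq\{1,2,\ldots,k\}$ be an arbitrary index set with $|I| = k-n$, so that $|I^C| = n$. By the definition of full spark, every set of $n$ vectors from $\F$ is a basis for $\Hn$; in particular $\{f_i\}_{i\in I^C}$ is a basis, hence spans $\Hn$, hence is a frame for $\Hn$. Since $I$ was arbitrary, $\F$ is robust to $k-n$ erasures, and combining this with $\rob{\F}\le k-n$ gives $\rob{\F} = k-n$.

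I do not anticipate a genuine obstacle here: the statement is essentially a direct unwinding of the definitions of robustness and of full spark, together with the span characterization of frames in finite dimensions. The only point requiring a moment's care is making sure the bound is strict in the right place --- that robustness to $r$ erasures fails as soon as a single complement of size $r$ drops below $n$ vectors, which is why the threshold lands exactly at $k-n$.
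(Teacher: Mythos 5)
Your proof is correct and matches what the paper intends: the Observation is stated without proof as following directly from the definitions, and your argument is exactly that unwinding (fewer than $n$ vectors cannot span $\Hn$, and in the full spark case any remaining $n$ vectors form a basis). No issues.
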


The goal is to develop methods to easily compute 
 the maximum robustness of any given frame. There are some known results that allow us to check whether a frame is robust to a particular number of erasures. One obvious method to determine maximum robustness is to use these results to find the number \(r\) such that the frame is robust to \(r\), but not \(r+1\), erasures. 
 We first state two known results.

\begin{theorem}[\cite{erasures}]\label{Jthm:robTo1}
Let \(\mathcal{F}=\{f_i\}_{i=1}^k\) be a frame for  \(\mathbb{R}^n\). The following are equivalent:
\item[(1)] \( \F \) is a frame robust to one erasure.
\item[(2)] There are scalars \(c_i\neq 0\), for \(1\le i\le k\), such that
\begin{equation*}
\sum_{i=1}^k c_i f_i=0.
\end{equation*}
\end{theorem}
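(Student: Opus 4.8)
The plan is to prove the equivalence by relating frame-ness of the punctured family to the existence of a nowhere-zero vector in the kernel of the synthesis operator. First I would recall that $\F = \{f_i\}_{i=1}^k$ is a frame for $\R^n$, which means the synthesis operator $\theta^*$ (the $n \times k$ matrix with columns $f_i$) has rank $n$, i.e. $k \geq n$ and the rows of $\theta$ are linearly independent. For a fixed index $j$, removing $f_j$ leaves a family that is a frame for $\R^n$ precisely when $\theta^*$ with its $j$-th column deleted still has rank $n$; equivalently, the rows of the punctured matrix remain linearly independent. The key linear-algebra fact is that deleting column $j$ drops the rank exactly when every vector in $\ker \theta^*$ (the $k$-dimensional nullspace relations $\sum_i c_i f_i = 0$) has $j$-th coordinate zero — because the row space is unchanged under deleting a column $j$ iff that column lies in the span of... more precisely, $\{f_i\}_{i \neq j}$ fails to span iff $e_j$ (the $j$-th standard basis vector of $\R^k$) is orthogonal to the row space of $\theta$, i.e. $e_j \in (\operatorname{row} \theta)^\perp = \ker \theta^*$ viewed appropriately; but that is too strong. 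Let me instead use: $\{f_i\}_{i\neq j}$ spans $\R^n$ iff there is NO nonzero linear functional vanishing on all $f_i$, $i \neq j$, other than... Actually the cleanest route: $\operatorname{span}\{f_i\}_{i\neq j} = \R^n$ iff $f_j \notin \operatorname{span}\{f_i : i \neq j\}$ is FALSE in general, so I would argue directly.

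The direct argument I would run: $\{f_i\}_{i \neq j}$ is a frame for $\R^n$ iff $f_j \in \operatorname{span}\{f_i : i \neq j\}$ OR $\{f_i\}_{i\neq j}$ already spans — these are not the same, so the right statement is that $\{f_i\}_{i \neq j}$ spans $\R^n$ iff whenever $\sum_{i=1}^k c_i f_i = 0$ we need not have $c_j \neq 0$; equivalently, $\{f_i\}_{i\neq j}$ fails to span iff the only dependency relations have $c_j = 0$ AND the punctured set is dependent in a deficient way. Rather than belabor this, the slick statement is: since $\F$ spans $\R^n$, the set $\{f_i\}_{i\neq j}$ spans $\R^n$ iff $f_j$ is a linear combination of $\{f_i\}_{i\neq j}$, i.e. iff there is a relation $\sum_i c_i f_i = 0$ with $c_j \neq 0$. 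So (2) $\Rightarrow$ (1): if a relation with all $c_i \neq 0$ exists, then for every $j$ we have a relation with $c_j \neq 0$, hence each $\{f_i\}_{i\neq j}$ spans, hence $\F$ is robust to one erasure. For (1) $\Rightarrow$ (2): for each $j$, robustness gives a relation $\sum_i c_i^{(j)} f_i = 0$ with $c_j^{(j)} \neq 0$; then I would take a generic linear combination $\sum_j t_j \big(\sum_i c_i^{(j)} f_i\big) = 0$ and argue that for suitable real coefficients $t_j$ the resulting coefficient vector $c_i = \sum_j t_j c_i^{(j)}$ has every entry nonzero — each coordinate $c_i$ vanishes only on a proper affine subspace (hyperplane) of the parameter space, since the $j=i$ term guarantees $c_i$ is not identically zero as a function of $t$, and a finite union of $k$ hyperplanes cannot cover $\R^k$.

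The main obstacle I anticipate is the $(1) \Rightarrow (2)$ direction: patching together the $k$ separate dependency relations into a single one that is simultaneously nonzero in every coordinate. The Baire-category / finite-union-of-hyperplanes argument handles this cleanly over $\R$, and I would make sure to phrase the "$c_i$ is not identically zero in $t$" step carefully (it follows because setting $t_i = 1$ and all other $t_j = 0$ gives $c_i = c_i^{(i)} \neq 0$). A secondary, more elementary point to nail down is the equivalence "$\{f_i\}_{i\neq j}$ frame for $\R^n$ $\iff$ $\exists$ relation with $c_j \neq 0$," which I would justify by noting $\{f_i\}_{i=1}^k$ already spans, so deleting $f_j$ preserves the span iff $f_j$ lies in the span of the rest. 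I expect the whole argument to take well under a page.
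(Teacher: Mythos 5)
The paper does not prove this statement; it is quoted as a known result from the cited reference, so there is no in-paper argument to compare yours against. On its own merits your proof is correct. The key lemma you settle on after some back-and-forth --- since \(\{f_i\}_{i=1}^k\) already spans \(\mathbb{R}^n\), the punctured family \(\{f_i\}_{i\neq j}\) spans if and only if there is a dependency relation \(\sum_i c_i f_i = 0\) with \(c_j \neq 0\) --- is exactly right, and both implications follow cleanly from it: one direction is immediate, and for \((1)\Rightarrow(2)\) your device of forming \(c = \sum_j t_j c^{(j)}\) and choosing \(t\) off the finite union of hyperplanes \(\{t : \sum_j t_j c_i^{(j)} = 0\}\) (each proper because \(t = e_i\) gives \(c_i^{(i)} \neq 0\)) is the standard and correct way to patch the \(k\) separate relations into one that is nowhere zero; a finite union of proper subspaces cannot cover \(\mathbb{R}^k\). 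The only criticism is expository: the middle of your writeup audibly searches for the right formulation of the deletion lemma before finding it, and the false starts (e.g.\ the detour through \(e_j \in \ker\theta^*\)) should be cut from a final version. I would also state explicitly the one-line verification of the lemma: if \(c_j \neq 0\) then \(f_j = -c_j^{-1}\sum_{i\neq j} c_i f_i\) lies in the span of the rest, and conversely if the rest span \(\mathbb{R}^n\) then they contain \(f_j\) in their span, yielding a relation with \(j\)-th coefficient \(1\).
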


The following is a generalization of Theorem \ref{Jthm:robTo1}.

\begin{theorem}[\cite{narayan}]\label{Jthm:robToR}
Let \(\mathcal{F}=\{f_i\}_{i=1}^k\) be a frame for \(\mathbb{R}^n\) with synthesis operator \(\theta^*\). The following are equivalent:
\item[(1)]  \(\F\) is a frame robust to \(r\) erasures.
\item[(2)]  For all index sets \( \calI \subseteq  \{1,2,\ldots,k\}\) with \(| \calI |=r-1\), 
\[ \calI^C\in \Set{ supp(f) \,:\, f \in  null (\theta^*)} ,\]
where \(supp(f) \) is the set of indices where the vector $f$ has nonzero components.  
\end{theorem}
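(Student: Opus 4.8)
The plan is to translate the combinatorial erasure condition into a statement about coordinate subspaces and the kernel of the synthesis operator, and then to close with a genericity argument that is valid over $\R$. Throughout, assume $r\ge 1$, so that $|\calI|=r-1$ makes sense, and recall from Observation~\ref{upper} that a frame robust to $r$ erasures satisfies $k\ge n+r$. Since in finite dimensions a subfamily is a frame exactly when it spans, $\F$ is robust to $r$ erasures if and only if $\{f_i\}_{i\in I^C}$ spans $\R^n$ for every $I\subseteq\n{k}$ with $|I|=r$. For $J\subseteq\n{k}$ write $\R^J:=\Span{e_i : i\in J}\subseteq\R^k$ for the corresponding coordinate subspace and $\pi_J\colon\R^k\to\R^J$ for the coordinate projection. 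The first step is the elementary observation that, because $\theta^*$ maps $\R^k$ onto $\R^n$, the image of $\R^{I^C}$ under $\theta^*$ is $\Span{f_i : i\in I^C}$, and this image is all of $\R^n$ if and only if $\R^{I^C}+\operatorname{null}(\theta^*)=\R^k$, equivalently if and only if $\pi_I(\operatorname{null}(\theta^*))=\R^I$. Hence $\F$ is robust to $r$ erasures exactly when $\pi_I(\operatorname{null}(\theta^*))=\R^I$ for every $I\subseteq\n{k}$ with $|I|=r$, and it remains to prove that this projection condition is equivalent to statement (2).

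For the implication from (2) to the projection condition, fix $I$ with $|I|=r$ and $j\in I$, and apply (2) with $\calI=I\setminus\{j\}$ to get $c\in\operatorname{null}(\theta^*)$ with $\operatorname{supp}(c)=\calI^C$; then $c_j\neq 0$ while $c_i=0$ for every $i\in I\setminus\{j\}$, so $\pi_I(c)$ is a nonzero multiple of $e_j$. Letting $j$ range over $I$ gives $\pi_I(\operatorname{null}(\theta^*))\supseteq\Span{e_j : j\in I}=\R^I$. For the reverse implication, fix $\calI$ with $|\calI|=r-1$; for each $j\notin\calI$, applying the projection condition to $I_j:=\calI\cup\{j\}$ (a set of size $r$) produces $c^{(j)}\in\operatorname{null}(\theta^*)$ with $\pi_{I_j}(c^{(j)})=e_j$, that is, $c^{(j)}$ vanishes on $\calI$ and has $j$-th coordinate equal to $1$.

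The remaining and, I expect, only genuinely non-formal step is to assemble from the $c^{(j)}$ a single null vector whose support is \emph{exactly} $\calI^C$. I would consider $c(t):=\sum_{j\notin\calI}t_j\,c^{(j)}$ for $t=(t_j)_{j\notin\calI}\in\R^{|\calI^C|}$; every $c(t)$ lies in $\operatorname{null}(\theta^*)$ and vanishes on $\calI$, so it suffices to choose $t$ with $c(t)_i\neq 0$ for each $i\notin\calI$. For a fixed such $i$, the coordinate $c(t)_i=\sum_{j\notin\calI}t_j\,c^{(j)}_i$ is a linear form in $t$ that is not identically zero, since taking $t_i=1$ and all other entries to $0$ gives the value $c^{(i)}_i=1$; hence $\{t : c(t)_i=0\}$ is a proper linear subspace of $\R^{|\calI^C|}$. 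As $|\calI^C|=k-r+1\ge 1$ and $\R$ is infinite, the finitely many such subspaces (one for each $i\notin\calI$) cannot cover $\R^{|\calI^C|}$, so any $t$ outside their union yields $c(t)\in\operatorname{null}(\theta^*)$ with $\operatorname{supp}(c(t))=\calI^C$, which is precisely what (2) asserts. This last avoidance argument is where the hypothesis that the underlying field is $\R$ (or merely infinite) is used.

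As a sanity check, when $r=1$ one has $\calI=\emptyset$ and $\calI^C=\n{k}$, so (2) reduces to the existence of a nowhere-zero element of $\operatorname{null}(\theta^*)$, recovering Theorem~\ref{Jthm:robTo1}. One could alternatively proceed by induction on $r$, using the easily verified equivalence that $\F$ is robust to $r+1$ erasures if and only if $\{f_i\}_{i\neq j}$ is robust to $r$ erasures for every $j\in\n{k}$, together with the fact that the kernel of the synthesis operator of $\{f_i\}_{i\neq j}$ is the slice of $\operatorname{null}(\theta^*)$ by the hyperplane $\{c_j=0\}$; that route uses Theorem~\ref{Jthm:robTo1} as the base case but still needs the same subspace-avoidance argument in the inductive step.
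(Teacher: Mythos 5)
Your argument is correct, and in fact the paper does not prove this theorem at all --- it is quoted from \cite{narayan} as a known result --- so there is no in-paper proof to compare against; what you have written is a complete, self-contained replacement. The two substantive moves both check out. First, the reduction of ``$\{f_i\}_{i\in I^C}$ spans $\R^n$'' to ``$\pi_I(\operatorname{null}(\theta^*))=\R^I$'' is clean: since $\theta^*$ is onto (as $\F$ is a frame), $\theta^*(\R^{I^C})=\R^n$ iff $\R^{I^C}+\operatorname{null}(\theta^*)=\R^k$, and since $\R^{I^C}=\ker\pi_I$ this is exactly surjectivity of $\pi_I$ restricted to the null space. Second, the delicate point in the theorem is that condition (2) demands a null vector whose support is \emph{exactly} $\calI^C$, not merely contained in it, and your subspace-avoidance argument (a finite union of proper subspaces cannot cover $\R^{|\calI^C|}$) handles this correctly; this is also, as you note, the one place where the hypothesis that the field is $\R$ (or any infinite field) is genuinely used, and it is consistent with the theorem being stated only for $\R^n$. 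The $r=1$ sanity check against Theorem~\ref{Jthm:robTo1} and the observation that the degenerate cases are excluded by $r\ge 1$ and $k\ge n+r$ are both right. The only cosmetic remark is that the final equality $\operatorname{supp}(c(t))=\calI^C$ deserves the one-line justification you implicitly give: $c(t)$ vanishes on $\calI$ because every $c^{(j)}$ does, and is nonzero off $\calI$ by the choice of $t$.
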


In order to use Theorem \ref{Jthm:robToR} we should be able to compute the support of the null space of the synthesis operator. We have provided an algorithm for computing the support of the null space of a matrix $A$ in the Appendix. 
For more details about erasure we refer the reader to \cite{HP04, PK05, FM12}. 
The following lemma states that for finding maximum robustness of a frame we can ignore zero vectors in the frame. 

\begin{lemma}\label{twothree}
Let \(\mathcal{F}=\{f_i\}_{i=1}^k\) be a frame for \(\mathcal{H}_n\).  Then $\rob{\F} = \rob{\F \setminus \{0\}}$. 
\end{lemma}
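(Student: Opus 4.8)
The plan is to show that erasing a zero vector never helps or hurts whether a remaining subset spans $\mathcal{H}_n$, and then to track the bookkeeping on the index $r$. Write $\mathcal{F}=\{f_i\}_{i=1}^k$ and let $Z=\{i : f_i = 0\}$, $m = |Z|$, so that $\mathcal{F}\setminus\{0\}$ has $k-m$ vectors, all nonzero. The key elementary observation is: for any index set $J\subseteq\{1,\dots,k\}$, the collection $\{f_i\}_{i\in J}$ spans $\mathcal{H}_n$ if and only if $\{f_i\}_{i\in J\setminus Z}$ spans $\mathcal{H}_n$, since removing zero vectors from a spanning set leaves a spanning set and vice versa. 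Hence a subset of $\mathcal{F}$ is a frame for $\mathcal{H}_n$ exactly when its nonzero part is.

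First I would prove $\rob{\mathcal{F}} \le \rob{\mathcal{F}\setminus\{0\}}$. Suppose $\mathcal{F}$ is robust to $r$ erasures. I claim $\mathcal{F}\setminus\{0\}$ is too. Take any $I\subseteq\{1,\dots,k-m\}$ with $|I|=r$ indexing $r$ of the nonzero vectors; I must show the remaining $k-m-r$ nonzero vectors span. Enlarge $I$ to a set $I'$ of size $r$ within the original index set in the obvious way (the nonzero vectors of $\mathcal{F}\setminus\{0\}$ sit inside $\mathcal{F}$), so that erasing $I'$ from $\mathcal{F}$ leaves a spanning set; since that remaining set's nonzero part is exactly the $k-m-r$ vectors in question (the zero vectors contribute nothing), those span $\mathcal{H}_n$. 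This needs $r \le k-m-n$, which holds because $\rob{\mathcal{F}}\le k-n$ forces... actually more carefully, $\mathcal{F}$ robust to $r$ already forces $r\le k-n$, and one checks $\mathcal{F}\setminus\{0\}$ being a frame needs $k-m\ge n$, and robustness to $r$ of the full frame with $m$ zeros forces $r \le k - m - n$; I would verify this by noting that after erasing any $r$ \emph{nonzero} vectors the result still spans, which already requires at least $n$ nonzero vectors to remain.

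For the reverse inequality $\rob{\mathcal{F}\setminus\{0\}} \le \rob{\mathcal{F}}$, suppose $\mathcal{F}\setminus\{0\}$ is robust to $r$. Given any $I\subseteq\{1,\dots,k\}$ with $|I|=r$, I want $\{f_i\}_{i\in I^C}$ to span. Let $I_0 = I\setminus Z$ be the nonzero indices erased, and write $s = |I_0|\le r$. The remaining nonzero vectors of $\mathcal{F}$ after erasing $I$ are all but $s$ of the nonzero vectors, i.e.\ a set obtained by deleting $s\le r$ nonzero vectors from $\mathcal{F}\setminus\{0\}$; by monotonicity of robustness (robust to $r$ implies robust to any $s\le r$, which I would note follows directly from the definition since a frame robust to $r$ erasures is robust to fewer — or prove in one line) this spans $\mathcal{H}_n$, hence $\{f_i\}_{i\in I^C}$ spans. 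Combining the two inequalities gives $\rob{\mathcal{F}} = \rob{\mathcal{F}\setminus\{0\}}$.

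The main obstacle is not conceptual but careful handling of the edge cases and the index arithmetic: one must make sure $\rob$ is well-defined (i.e.\ both frames are genuinely frames, which requires $k-m\ge n$) and that the inequality $r\le k-m-n$ is respected throughout so that the enlargement/restriction of index sets makes sense. I would isolate as a preliminary remark the two facts used repeatedly — deleting zero vectors preserves the spanning property, and robustness to $r$ implies robustness to every smaller number of erasures — and then the two inequalities are short.
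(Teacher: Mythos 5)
Your proof is correct and is essentially the paper's argument written out in full detail: the paper disposes of the lemma in one line by observing that $\operatorname{span}(\calS)=\operatorname{span}(\calS\setminus\{0\})$ for any subset $\calS$ of frame vectors, which is exactly the ``key elementary observation'' you isolate before proving the two inequalities. The extra bookkeeping (monotonicity of robustness, the bound $r\le k-m-n$) is sound but not a different route.
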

The proof  of Lemma \ref{twothree}  follows from the observation that for any subset $\calS$ of frame vectors,  
\( \text{span}(\calS) = \text{span}(\calS \setminus \{0\} ) \). 
The next proposition gives an upper bound for \( \rob{\F} \). 

\begin{proposition}\label{Jprop:upperBound}
Let \(\mathcal{F}=\{f_i\}_{i=1}^k\) be a frame for \(\mathcal{H}_n\) and let 
\[\calS=\{I\subseteq\{1,2,\ldots,k\}:\operatorname{span}(\{f_i\}_{i\in I})=\mathcal{H}_n\}\] be the collection  of index sets that correspond to spanning sets. Then \(\operatorname{rob}(\mathcal{F})\le \lfloor\log_2{|\calS|}\rfloor\).
\end{proposition}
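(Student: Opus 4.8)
The plan is to exploit the fact that the family $\calS$ of spanning index sets is upward closed under inclusion, and then simply count. First I would observe that if $J\in\calS$ and $J\subseteq J'\subseteq\{1,2,\ldots,k\}$, then $J'\in\calS$, since enlarging a spanning set keeps it spanning. Next, set $r=\rob{\F}$. By Definition \ref{Jdefn:robust}, for every $I\subseteq\{1,2,\ldots,k\}$ with $|I|=r$ the sequence $\{f_i\}_{i\in I^C}$ is again a frame, i.e.\ $I^C\in\calS$; and as $I$ ranges over all $r$-element subsets, $I^C$ ranges over all $(k-r)$-element subsets. Combining this with upward closure, every subset of $\{1,2,\ldots,k\}$ of cardinality at least $k-r$ belongs to $\calS$.

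I would then bound the cardinality directly:
\[
|\calS|\ \ge\ \sum_{j=k-r}^{k}\binom{k}{j}\ =\ \sum_{i=0}^{r}\binom{k}{i}.
\]
Since $\rob{\F}\le k-n\le k$ by Observation \ref{upper}, we have $r\le k$, so $\binom{k}{i}\ge\binom{r}{i}$ for each $i$ with $0\le i\le r$, and therefore $|\calS|\ge\sum_{i=0}^{r}\binom{r}{i}=2^{r}$. Taking base-$2$ logarithms and using that $r=\rob{\F}$ is a nonnegative integer yields $\lfloor\log_2|\calS|\rfloor\ge r=\rob{\F}$, which is the assertion.

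The only real content is the passage from ``robust to $r$ erasures'' to ``$|\calS|\ge 2^{r}$''; once the upward-closure observation is in hand, the remainder is just the elementary inequality $\binom{k}{i}\ge\binom{r}{i}$ for $i\le r\le k$. I expect the main (rather mild) obstacle to be resisting the temptation to chase a sharper estimate: the proposition claims only an upper bound, and this crude count already delivers it. I would also record the degenerate case $r=0$, where the inequality reads $\lfloor\log_2|\calS|\rfloor\ge 0$ and holds because $\F$ being a frame forces $\{1,2,\ldots,k\}\in\calS$, so $\calS\neq\varnothing$ and the logarithm is well defined.
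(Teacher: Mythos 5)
Your proof is correct, and it takes a genuinely different (though related) route from the paper's. The paper argues by contradiction: assuming robustness to $m=\lfloor\log_2|\calS|\rfloor+1$ erasures, it fixes the single spanning set $\{m+1,\ldots,k\}$ and notes that adjoining any of the $2^m$ subsets of $\{1,\ldots,m\}$ yields distinct members of $\calS$ (formally $|\calS|\ge 2^m|\calS'|$ with $\calS'\neq\emptyset$), which exceeds $|\calS|$ by the choice of $m$. You argue directly: robustness to $r=\rob{\F}$ erasures plus upward closure of $\calS$ places every subset of cardinality at least $k-r$ in $\calS$, and the count $\sum_{i=0}^{r}\binom{k}{i}\ge\sum_{i=0}^{r}\binom{r}{i}=2^{r}$ gives $|\calS|\ge 2^{r}$, hence $r\le\lfloor\log_2|\calS|\rfloor$. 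Both proofs rest on the same underlying fact---robustness to $r$ erasures forces at least $2^{r}$ spanning index sets---but your decomposition counts all sufficiently large subsets rather than the supersets of one fixed spanning set, avoids the contradiction, and in passing yields the sharper intermediate estimate $|\calS|\ge\sum_{i=0}^{\rob{\F}}\binom{k}{i}$, which is essentially the refinement the paper records in the remark immediately following the proposition. Each individual step you use---upward closure of $\calS$, the bijection between $r$-subsets and their $(k-r)$-element complements, and $\binom{k}{i}\ge\binom{r}{i}$ for $0\le i\le r\le k$---is sound, as is your handling of the degenerate case $r=0$.
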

\begin{proof}
Suppose \(\mathcal{F}\) is robust to  \( m = \lfloor\log_2{|\calS|}\rfloor+1\) erasures. 
Then \( \{f_i\}_{i=m+1}^k  \) is a frame. Let 
\[\calS'=\{I\subseteq\{m+1, \ldots,k\}:\operatorname{span}(\{f_i\}_{i\in I})=\mathcal{H}_n\}.\] 
Then for any  $J \subseteq \Set{1, 2, \cdots, m}$ and $I \in \calS'$, $I \cup J  \in \calS$. 
Thus we have $|\calS| \ge 2^m |\calS'|$, which implies 
\[ |\calS'| \le \frac{|\calS|}{2^m} <
\frac{|\calS|}{2^{\log_2{|\calS|} }} =1.  \]
Consequently,  $\calS' = \emptyset$. This is a contradiction since 
$\Set{m+1, \cdots, k} \in \calS'.$
\end{proof}

We note that if $\mathcal{F}$ is a frame robust to $r$ erasures, then $ r \le \operatorname{rob}(\mathcal{F})$ 
and $$ \sum_{i=0}^r \left(  \begin{matrix} n  \cr n-r+i \end{matrix} \right)  \le |\calS| $$ since each set of at least $n-r$ vectors is a frame. 
In the next proposition, we express the maximum robustness of a frame  using the 
cardinality of maximum  nonspanning set. 

\begin{proposition}\label{Jprop:HnMaxRob}
Let \(\mathcal{F}=\{f_i\}_{i=1}^k\) be a frame for \(\mathcal{H}_n\), and let \(N=\{I\subseteq\{1,2,\ldots,k\}:\operatorname{span}(\{f_i\}_{i\in I})\neq\mathcal{H}_n\}\) be the collection of index sets that correspond to  nonspanning sets. Then
\begin{equation*}
\operatorname{rob}(\mathcal{F})=k-1-\max\{|I|:I\in N\}.
\end{equation*}
\end{proposition}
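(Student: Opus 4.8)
The plan is to reduce the statement to a short counting argument about complements of index sets. Write $N$ for the collection of nonspanning index sets and put $m = \max\{|I| : I \in N\}$; this maximum is attained over a nonempty family, and $0 \le m \le k-1$, since $\emptyset \in N$ (as $n \ge 1$), while $\{1,2,\ldots,k\} \notin N$ (as $\mathcal{F}$ is a frame). By Definition \ref{Jdefn:robust}, $\mathcal{F}$ is robust to $r$ erasures if and only if $\{f_i\}_{i\in I^C}$ spans $\mathcal{H}_n$ for every index set $I$ with $|I| = r$; passing to complements, this says exactly that every index set of cardinality $k-r$ is a spanning set.

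I would then prove the chain of equivalences: $\mathcal{F}$ is robust to $r$ erasures $\iff$ every $(k-r)$-element index set spans $\mathcal{H}_n$ $\iff$ $k-r > m$. For the forward direction of the second equivalence, argue by contraposition: if $k-r \le m$, choose $I_0 \in N$ with $|I_0| = m$ and any $J \subseteq I_0$ with $|J| = k-r$; then $\operatorname{span}(\{f_i\}_{i\in J}) \subseteq \operatorname{span}(\{f_i\}_{i\in I_0}) \ne \mathcal{H}_n$, so a $(k-r)$-element set fails to span. Conversely, if $k-r > m$, then every $(k-r)$-element index set has cardinality exceeding $m$, hence lies outside $N$, hence spans. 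Therefore $\mathcal{F}$ is robust to $r$ erasures precisely when $r \le k-1-m$, and since $k-1-m \ge 0$ the set of admissible values of $r$ is $\{0,1,\ldots,k-1-m\}$, whose maximum is $k-1-m$. Hence $\rob{\F} = k-1-m = k-1-\max\{|I| : I \in N\}$.

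There is no serious obstacle here; the proof is a bookkeeping argument with set complements and cardinalities. The only points requiring a little care are verifying that the maximum defining $m$ ranges over a nonempty, suitably bounded family (so that $k-1-m$ is a well-defined nonnegative integer, consistent with Observation \ref{upper}), and correctly matching the size of an index set to the size of its complement when translating between ``erasing $r$ vectors'' and ``retaining $k-r$ vectors.''
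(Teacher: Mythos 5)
Your proof is correct and follows essentially the same approach as the paper's: exhibit a maximal nonspanning set $I_0$, note that any index set with more than $|I_0|$ elements must span by maximality, and that erasing the complement of $I_0$ leaves a nonspanning set. Your version is slightly more careful in spelling out the complement bookkeeping and the monotonicity of robustness in $r$, which the paper leaves implicit.
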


\begin{proof}
Let \(I_0\in N\) be such that \(|I_0|=\max\{|I|:I\in N\}\). Suppose any \(k-1-|I_0|\) vectors are removed from \(\mathcal{F}\). Then \(|I_0|+1\) vectors remain. Since the remaining set spans $\Hn$,   \(\mathcal{F}\) is robust to \(k-1-|I_0|\) erasures. 
Suppose the \(k-|I_0|\) vectors whose indices are not in \(I_0\) are removed from \(\mathcal{F}\). The remaining vectors corresponding to \(I_0 \) do not span \(\mathcal{H}_n\), so \(\mathcal{F}\) is not robust to \(k-|I_0|\) erasures. Therefore, 
\(
\operatorname{rob}(\mathcal{F})=k-1-|I_0|.
\)
\end{proof}

Using Proposition \ref{Jprop:HnMaxRob} we can find  a vector $y \in \Hn$ so that the  maximum robustness of a frame   
\(\mathcal{F}=\{f_i\}_{i=1}^k\)   in $\Hn$ is equal to one less than the count of nonzero numbers in the set $\{\langle f_i, y \rangle\}_{i=1}^k$. This is stated in the next theorem. 

\begin{theorem}\label{mrankoneproj}
If \(\mathcal{F}=\{f_i\}_{i=1}^k\) is a frame for \(\mathcal{H}_n\), then there exists a $y \in \Hn$ such that 
 $$\rob{\F} = \rob{\{\langle  f_i,y\rangle \}_{i=1}^k}.$$
\end{theorem}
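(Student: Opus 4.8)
The plan is to reduce the identity to a statement about a maximal nonspanning index set and then construct $y$ by hand. By Proposition~\ref{Jprop:HnMaxRob}, fix an index set $I_0\subseteq\{1,2,\ldots,k\}$ with $\operatorname{span}(\{f_i\}_{i\in I_0})\neq\mathcal{H}_n$ and $|I_0|$ as large as possible, so that $\operatorname{rob}(\mathcal{F})=k-1-|I_0|$. Now observe that a sequence of scalars $\{c_i\}_{i=1}^k$ is a frame for $\mathcal{H}_1$ exactly when it is not identically zero, and for such a sequence a subset spans $\mathcal{H}_1$ iff it contains some nonzero term; hence applying Proposition~\ref{Jprop:HnMaxRob} in $\mathcal{H}_1$ gives $\operatorname{rob}(\{\langle f_i,y\rangle\}_{i=1}^k)=\#\{i:\langle f_i,y\rangle\neq 0\}-1$ whenever $\langle f_i,y\rangle\neq 0$ for at least one $i$. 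Therefore it suffices to produce $y\in\mathcal{H}_n$ with $\langle f_i,y\rangle=0$ for $i\in I_0$ and $\langle f_i,y\rangle\neq 0$ for $i\notin I_0$: in that case the count of nonzero inner products is $k-|I_0|$, and both sides of the claimed identity equal $k-1-|I_0|$.

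To build such a $y$, set $V=\operatorname{span}(\{f_i\}_{i\in I_0})$, a proper subspace of $\mathcal{H}_n$, so $V^{\perp}\neq\{0\}$, and every $y\in V^{\perp}$ already satisfies $\langle f_i,y\rangle=0$ for $i\in I_0$. For each $i\notin I_0$, maximality of $I_0$ forces $f_i\notin V$ (otherwise $I_0\cup\{i\}$ would be a strictly larger nonspanning set), so the linear functional $y\mapsto\langle f_i,y\rangle$ is not identically zero on $V^{\perp}$, and its kernel $H_i:=\{y\in V^{\perp}:\langle f_i,y\rangle=0\}$ is a proper subspace of $V^{\perp}$. Since $\mathcal{H}_n$ is a vector space over $\mathbb{R}$ or $\mathbb{C}$—an infinite field—the finite union $\bigcup_{i\notin I_0}H_i$ cannot be all of $V^{\perp}$, so we may choose $y\in V^{\perp}$ avoiding every $H_i$. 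This $y$ has precisely the required vanishing pattern (and is nonzero, since $0\in H_i$ and $I_0^{C}\neq\emptyset$), which completes the argument.

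The main obstacle is this last construction step: ensuring that a single $y\in V^{\perp}$ can make all inner products $\langle f_i,y\rangle$ with $i\notin I_0$ simultaneously nonzero. The essential input is the classical fact that a vector space over an infinite field is not a finite union of proper subspaces; the rest—the reduction via Proposition~\ref{Jprop:HnMaxRob} and the maximality argument showing $f_i\notin V$ for $i\notin I_0$—is routine. One could instead phrase the genericity step probabilistically (a generic $y\in V^{\perp}$ works) or as a measure-zero/Zariski argument, but the finite-union-of-subspaces formulation is the cleanest.
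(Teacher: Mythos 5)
Your proof is correct and follows essentially the same route as the paper: both reduce the claim to Proposition~\ref{Jprop:HnMaxRob} via a maximum non-spanning subset $\calN$ of $\F$ and then take $y$ orthogonal to $\operatorname{span}(\calN)$ so that the nonzero inner products are exactly those with $f_i\notin\calN$. The only difference is the final step: the paper notes that maximality forces $\operatorname{span}(\calN)$ to be a hyperplane, so $\operatorname{span}(\calN)^\perp=\operatorname{span}\{y\}$ is one-dimensional and the required vanishing pattern is automatic, whereas you bypass this observation by choosing $y$ generically in $\operatorname{span}(\calN)^\perp$ using the finite-union-of-proper-subspaces argument---a slightly longer but equally valid finish.
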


\begin{proof} 
Let $\calN$ be a largest non-spanning subset of $\f$.   
Then we have 
$\rob{\f}  =  k -1 -|\calN|$ by Proposition \ref{Jprop:HnMaxRob} and $\Span{\calN}^\perp = \Span{y}$ for some $y$ in \(\mathcal{H}_n\). 
Since $ \inpro{f_i, y}=0$ if and only if  $f_i\in \calN $,  $\{\langle f_i, y \rangle\}_{i=1}^k$ contains $|\calN|$ zero vectors and $k-|\calN|$ nonzero vectors, which implies that 
$\rob{\{\langle f_i, y\rangle\}_{i=1}^k} =  k -1 -|\calN|$.
\end{proof}

The following is an example of the implementation of this theorem. We note that in general finding the largest
non-spanning set is a combinatorially hard problem. 

\begin{example}
Let $$\mathcal{F} = \left\{\left(\begin{array}{c}
3\\
0
\end{array}\right),
\left(\begin{array}{c}
2\\
0
\end{array}\right),
\left(\begin{array}{c}
1\\
0
\end{array}\right),
\left(\begin{array}{c}
1\\
1
\end{array}\right),
\left(\begin{array}{c}
0\\
1
\end{array}\right),
\left(\begin{array}{c}
1\\
-1
\end{array}\right)\right\}\subseteq \R^2.$$ The largest non-spanning set  is $ \calN = \left\{\left(\begin{array}{c}
3\\
0
\end{array}\right),
\left(\begin{array}{c}
2\\
0
\end{array}\right),
\left(\begin{array}{c}
1\\
0
\end{array}\right)\right\}$.
As in the proof of Theorem 2.3,  we take $y = \left(\begin{array}{c}
0\\
1
\end{array}\right)$. Thus, taking the inner product of vectors in $\mathcal{F}$ with $y$, we have $\{0,0,0,1,1,-1\}$, which clearly has a maximum robustness of two. Therefore $\mathcal{F}$ must have a maximum robustness of two as well.
\end{example}

The following theorem tells us that a transformation onto a smaller space can also preserve the maximum robustness of a particular frame.

\begin{theorem}\label{Jthm:robPreservingProj}
Let \(\F=\{f_i\}_{i=1}^k\) be a frame for \(\Hn\) with  \( \rob{\F} = r\) and let \(P\) be an orthogonal projection onto a subspace \(U\). The frame \(  P\F \) for \( U \) has maximum robustness \(r\) if and only if  there exists a subset \(\mathcal{F}'\) of \(k-r-1\) vectors which does not span \(\Hn\) and 
\(\operatorname{span}(\mathcal{F}')^\perp\subseteq  U\). 
\end{theorem}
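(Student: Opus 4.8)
The plan is to translate the statement about $P\mathcal{F}$ into the language of Proposition~\ref{Jprop:HnMaxRob}, working with maximal nonspanning index sets in $U$ versus in $\Hn$. Recall that for any index set $I$, we have $\Span{\{Pf_i\}_{i\in I}} = P\,\Span{\{f_i\}_{i\in I}}$, so $\{Pf_i\}_{i\in I}$ spans $U$ if and only if $\Span{\{f_i\}_{i\in I}}$ contains a complement of $\ker P = U^\perp$ in $\Hn$, i.e. if and only if $\Span{\{f_i\}_{i\in I}} + U^\perp = \Hn$, equivalently $\Span{\{f_i\}_{i\in I}}^\perp \cap U = \{0\}$. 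So an index set $I$ is \emph{nonspanning for $P\mathcal{F}$ in $U$} precisely when $\Span{\{f_i\}_{i\in I}}^\perp \cap U \neq \{0\}$, while $I$ is nonspanning for $\mathcal{F}$ in $\Hn$ precisely when $\Span{\{f_i\}_{i\in I}}^\perp \neq \{0\}$. Clearly the former condition implies the latter, so every nonspanning set for $P\mathcal{F}$ is nonspanning for $\mathcal{F}$; hence by Proposition~\ref{Jprop:HnMaxRob}, $\rob{P\mathcal{F}} \le \rob{\mathcal{F}} = r$ always. Note both frames have the same number of vectors $k$ (after discarding that $Pf_i = 0$ is allowed — by Lemma~\ref{twothree} zero vectors don't affect robustness, so this is harmless, but I should remark on it).

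With that dictionary in hand, the theorem reduces to: $\rob{P\mathcal{F}} = r$ (as opposed to $< r$) if and only if there is a maximal nonspanning set for $P\mathcal{F}$ of size exactly $k-r-1$, which by the general inequality is the largest conceivable size. First I would prove the ``if'' direction: given $\mathcal{F}'$ a set of $k-r-1$ vectors (index set $I'$) with $\Span{\mathcal{F}'} \neq \Hn$ and $\Span{\mathcal{F}'}^\perp \subseteq U$, pick $0 \neq y \in \Span{\mathcal{F}'}^\perp \subseteq U$; then $y \in \Span{\{f_i\}_{i\in I'}}^\perp \cap U$, so $I'$ is nonspanning for $P\mathcal{F}$. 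Thus $\max\{|I| : I \text{ nonspanning for } P\mathcal{F}\} \ge k-r-1$, and combined with $\rob{P\mathcal{F}} \le r$ (which forces that max to be $\le k-r-1$) we get equality, hence $\rob{P\mathcal{F}} = k - 1 - (k-r-1) = r$.

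For the ``only if'' direction: suppose $\rob{P\mathcal{F}} = r$. By Proposition~\ref{Jprop:HnMaxRob} applied to $P\mathcal{F}$ in $U$, there is a largest nonspanning-for-$P\mathcal{F}$ index set $I_0$ with $|I_0| = k - 1 - r$. Let $\mathcal{F}' = \{f_i\}_{i\in I_0}$; this has $k-r-1$ vectors. Since $I_0$ is nonspanning for $P\mathcal{F}$, there is $0\neq y \in \Span{\{f_i\}_{i\in I_0}}^\perp \cap U$; in particular $\Span{\mathcal{F}'}^\perp \neq \{0\}$, so $\mathcal{F}'$ does not span $\Hn$. The remaining obligation is $\Span{\mathcal{F}'}^\perp \subseteq U$, and here is the one genuinely delicate point: a priori $\Span{\{f_i\}_{i\in I_0}}^\perp$ might be larger than one-dimensional and stick out of $U$. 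I expect this is handled by enlarging $I_0$: if $\Span{\mathcal{F}'}^\perp \not\subseteq U$, one can try to add more indices from the complement to kill the part of the orthocomplement outside $U$ while keeping $y$ (hence keeping the set nonspanning for $P\mathcal{F}$), contradicting maximality of $|I_0|$ — so the main obstacle is verifying that such an enlargement is always possible, i.e. that one can shrink $\Span{\{f_i\}_{i\in I_0}}^\perp$ down to lie inside $U$ without destroying the intersection with $U$. I would argue this by choosing $I_0$ among all maximum nonspanning-for-$P\mathcal{F}$ sets to additionally have $\Span{\{f_i\}_{i\in I_0}}$ of largest possible dimension; then if some $w \in \Span{\mathcal{F}'}^\perp \setminus U$ existed, adding an index $j$ with $f_j \notin \Span{\mathcal{F}'}$ but $f_j \perp y$ would strictly increase the span dimension while staying nonspanning for $P\mathcal{F}$ — and the existence of such a $j$ follows because $y^\perp$ is a hyperplane properly containing $\Span{\mathcal{F}'}$ (as $y \perp \mathcal{F}'$), while $\mathcal{F}$ as a whole spans $\Hn$ and so cannot lie in $y^\perp$... the bookkeeping here is the crux and is where I would spend the most care.
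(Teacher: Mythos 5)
Your reduction to Proposition \ref{Jprop:HnMaxRob} is the right framework, and your dictionary (an index set $I$ is nonspanning for $P\F$ in $U$ iff $\Span{\{f_i\}_{i\in I}}^\perp\cap U\neq\{0\}$) is correct, but two things go wrong. First, the monotonicity you extract from it is backwards: since every nonspanning set for $P\F$ is nonspanning for $\F$, the maximal nonspanning cardinality can only decrease under projection, so Proposition \ref{Jprop:HnMaxRob} gives $\rob{P\F}\ge\rob{\F}=r$, not $\le r$. (Projection makes a frame more robust, not less; e.g.\ $\{e_1,e_2,e_2\}$ in $\R^2$ has robustness $0$ while its projection onto $\Span{e_2}$ has robustness $1$.) Your ``if'' direction actually needs the correct inequality $\rob{P\F}\ge r$ to pair with the bound $\rob{P\F}\le r$ that you correctly derive from the exhibited nonspanning set; as written you invoke $\le r$ on both sides and the argument does not close. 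This part is easily repaired, but the inequality as you state it is false.

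The more serious problem is the step you yourself flag in the ``only if'' direction: showing $\Span{\mathcal{F}'}^\perp\subseteq U$. You leave this open and sketch an enlargement argument (choose $I_0$ maximizing span dimension, then adjoin some $f_j\perp y$ outside $\Span{\mathcal{F}'}$), but such an $f_j$ need not exist among the frame vectors, and your closing remark even argues for the existence of a vector \emph{not} orthogonal to $y$, which is the opposite of what you need. The gap closes with one observation you did not use: because $\rob{\F}=r$, every subset of $\F$ of size $k-r$ spans $\Hn$, so any nonspanning subset $\mathcal{F}'$ of size $k-r-1$ has $\dim\Span{\mathcal{F}'}=n-1$ exactly (adjoining any single further frame vector produces a spanning set). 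Hence $\Span{\mathcal{F}'}^\perp$ is one-dimensional; since it contains the nonzero vector $y\in U$, it equals $\Span{y}\subseteq U$. This is precisely how the paper's proof handles it, and it turns the entire ``delicate point'' into a one-line consequence of the hypothesis $\rob{\F}=r$.
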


\begin{proof}
(\(\Longrightarrow\))  Without loss of generality, we assume that \(\{P(f_i)\}_{i=1}^{k-r-1}\) does not span \(U\). Then \(\mathcal{F}'=\{f_i\}_{i=1}^{k-r-1}\) does not span \(\Hn\). 
Since \( \rob{\F} = r\), \(\operatorname{span}(\mathcal{F}')^\perp\) has dimension one.  
Since $P\F'$ does not span $U$, there exists a nonzero vector $e\in U$ such that 
$\inpro{e, Pf} = \inpro{e, f}=0,$ for $ f \in \F'$, which implies that  
\(\operatorname{span}(\mathcal{F}')^\perp = \Span{e} \subseteq U.\)  

(\(\Longleftarrow\)) Suppose there is some set \(\mathcal{F}'\) of \(k-r-1\) vectors from \(\mathcal{F}\) such that \(\mathcal{F}'\) does not span \(\Hn\) and \(\operatorname{span}(\mathcal{F}')^\perp\subseteq U\). Then for any nonzero vector \(e\in\operatorname{span}(\mathcal{F}')^\perp\) and \(f\in\mathcal{F}'\), \(0=\langle e,f\rangle=\langle e,Pf \rangle\) since \(e\in U\). Thus  \(\operatorname{span}(\mathcal{F}')^\perp\) has dimension at least one. Therefore \(P(\mathcal{F}')\) does not span \(U\), which implies that  \(P(\mathcal{F})\) is not robust to \(r+1\) erasures. Since the projected frame is necessarily robust to \(r\) erasures,  its maximum robustness is \(r\). \end{proof}

\begin{definition}\cite{redundancy}
Let \(\mathcal{F}=\{f_i\}_{i=1}^k\) be a frame for \(\mathcal{H}_n\).   For each $x \in \mathbb{S}$, the \textit{redundancy function} $\mathcal{R}_{\F} : \mathbb{S} \rightarrow \mathbb{R}^+$ is defined by
\[ \mathcal{R}_{\F}\left(x\right) = \sum_{i=1}^k\left\|P_{\left\langle f_i\right\rangle}\left(x\right)\right\|^2,
\]
where $\mathbb{S} = \left\{x \in \Hn : \left\|x\right\| = 1\right\}$  is the unit sphere in $\Hn$ and $P_{\left\langle f_i\right\rangle}\left(x\right)$ is the orthogonal projection of $x$ onto the span of $f_i$.
\end{definition}

In \cite{redundancy}, the concepts of upper and lower redundancy are also defined. The \textit{upper redundancy} $\mathcal{R}_{\F}^+$ of a frame is the maximum of its redundancy function taken over the unit sphere $\mathbb{S}$.  The \textit{lower redundancy} $\mathcal{R}_{\F}^-$ of a frame is the minimum of its redundancy function taken over the unit sphere $\mathbb{S}$.  According to \cite{redundancy}, $\left\lfloor \mathcal{R}_{\F}^-\right\rfloor$ is the maximum number of disjoint spanning sets in the frame $\F$ and $\left\lceil \mathcal{R}_{\F}^+\right\rceil$ is the minimum number of disjoint linearly independent sets in the frame $\F$. 

If a frame $\F$ in $\mathbb{R}^n$ has a maximum of $\left\lfloor \mathcal{R}_{\F}^-\right\rfloor$ disjoint spanning sets,  then by removing a vector from all but one of the disjoint spanning sets  
the frame is robust to at least $\left\lfloor \mathcal{R}_{\F}^-\right\rfloor - 1$ erasures. This observation together with Proposition \ref{Jprop:upperBound} gives us both 
 a lower and an upper bound for maximum robustness of a frame, namely 
$$ \left\lfloor \mathcal{R}_{\F}^-\right\rfloor -1 \leq \rob{\F} \le \text{min} \{ \lfloor\log_2{|\calS|}\rfloor, k-n\}.$$
\section{Tight subframes and Surgery on frames}
While there exist constructions that take a frame $\f \subseteq \Hn$ and add vectors to produce a tight frame \cite{classes}, it is not  clear whether we can instead choose some subset $\Set{f_i}_{i \in I} \subseteq \f$ that is a tight frame? This is useful to consider for several reasons. We may be interested in having vectors of certain norms. While we may apply some method of construction that could possibly keep the norms of the added vectors within a specified range, removing vectors from a frame to produce a tight frame would leave the norms unaffected. This ultimately relies only on what we have, so it is possible that this \emph{trimming method} may preserve some special features of the initial frame.

\begin{definition}\label{defsub}
Given a frame $\f $ for $ \Hn$, we say that $\Set{f_i}_{i \in I}  \subseteq \f$ is a \emph{tight subframe} if $\Set{f_i}_{i \in I} $ is itself a tight frame for $ \Hn$.
We say that a  \emph{$(p,q)$-surgery on $\f$ is possible} if and only if there exist $I \subseteq  \n{k}$ with $|I| = p$ and  $\{g_j\}_{j=1}^q \subseteq \Hn$ so that $\{f_i\}_{i \in I^C} \cup \{g_j\}_{j=1}^q$ is a tight frame for $\Hn$. When the relevant set $\f$ is clear, we may simply say that $(p,q)$-surgery is possible or impossible. In the case of surgery on unit-norm frames, we require that the new collection $\{g_j\}_{j=1}^q \subseteq \Hn$ contains only unit-norm vectors.
\end{definition}

\begin{definition}[\cite{REU11}]\label{Jdefn:RnDV}
For any vector \(f\in\mathbb{R}^n\), we define the diagram vector associated with \(f\), denoted \(\tilde{f}\), by
\begin{equation*}
\tilde{f}= 
\frac{1}{\sqrt{n-1}}
\begin{bmatrix}f^2(1)-f^2(2)\\  \vdots  \\ f^2(n-1)-f^2(n) \\  
\sqrt{2n}f(1)f(2) \\ \vdots  \\  \sqrt{2n}f(n-1)f(n)
 \end{bmatrix}\in\mathbb{R}^{n(n-1)},
\end{equation*}
where the difference of squares 
$f^2(i)- f^2(j)$ and the 
 product \(f(i)f(j)\)  occur exactly once for \(i < j, \ i = 1, 2, \cdots, n-1.\) 
 
 For any vector \(f\in\mathbb{C}^n\), we define the diagram vector associated with \(f\), denoted \(\tilde{f}\), by
\begin{equation*}
\tilde{f}= 
\frac{1}{\sqrt{n-1}}
\begin{bmatrix}f(1) \overline{f(1)}-f(2)\overline{f(2)} \\  \vdots  \\ f(n-1)\overline{f(n-1)}-f(n)\overline{f(n)} \\  
\sqrt{n}f(1) \overline{f(2)} \\ \sqrt{n} \overline{f(1)} f(2) \\ \vdots  \\  \sqrt{n}f(n-1)\overline{f(n)} 
\\ \sqrt{n} \overline{f(n-1)} f(n)
 \end{bmatrix}\in\mathbb{C}^{3n(n-1)/2},
\end{equation*}
where the difference of the form 
$f(i) \overline{f(i)} - f(j) \overline{f(j)}$ occurs exactly once for \(i < j, \ i = 1, 2, \cdots, n-1\)  and the 
 product of the form \(f(i)  \overline{f(j)} \)  occurs exactly once for \(i  \neq j.\) 
 \end{definition}

In order to give a formulation of  tight subframes and surgeries, we present the following characterization of tight frames.  
  We use the following remark in the next proposition. 
\begin{remark} (\cite{ffu}, p.121) \label{sym2}
Let $A, B$ be self-adjoint positive semidefinite matrices such that 
$\inpro{Ax, x} = \inpro{Bx, x}$ for all $ x \in \Hn$ . Then  $A=B$.
\end{remark}

\begin{proposition}\label{tight_char}
Let $k\ge n, \ \lambda >0$ and  $\F = \Set{f_i}_{i=1}^k$ be a sequence of vectors in $ \R^n ( \text{or } \C^n)$, not all of which are zero.  
Let $G$ be the Gramian associated with $\F$. 
Let $\tilde{G}$ be the Gramian associated with the diagram vectors 
 $\{\tilde{f}_i\}_{i=1}^k$. 
 Then  the following conditions are equivalent.
 \bn
\item[(1)] $\F$ is a  $\lambda$-tight  frame.
\item[(2)] $G$ has rank $n$ and $G^2 = \lambda G$.
\item[(3)] $\sigma (G) = (\underbrace{\lambda,\ldots,\lambda}_{n \text{ times}},\underbrace{0,\ldots,0}_{k-n \text{ times}})$.
 \item[(4)]  \(\sum_{i=1}^k\tilde{f_i}=0\). 
\item[(5)] $(1, 1, \cdots, 1)^T \in null(\tilde{G})$. 

\en
\end{proposition}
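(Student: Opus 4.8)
The plan is to split the five conditions into the spectral group (1)--(3), which concerns the Gramian $G=\theta\theta^*$ and the frame operator $S=\theta^*\theta$, and the diagram-vector group (4)--(5), and then to connect the two groups with a single inner-product identity for diagram vectors. For (1) $\Leftrightarrow$ (2) $\Leftrightarrow$ (3) I would use only standard facts: $S$ and $G$ are self-adjoint and positive semidefinite, $\operatorname{rank} G=\operatorname{rank} S$, and $G$ and $S$ have the same nonzero eigenvalues with multiplicity. If $S=\lambda I_n$ then $G^2=\theta(\theta^*\theta)\theta^*=\lambda\theta\theta^*=\lambda G$ and $\operatorname{rank} G=\operatorname{rank} S=n$, so (1)$\Rightarrow$(2). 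Since $G$ is diagonalizable and positive semidefinite, $G^2=\lambda G$ forces every eigenvalue of $G$ into $\{0,\lambda\}$, and $\operatorname{rank} G=n$ together with $k\ge n$ and $\lambda>0$ fixes the multiplicities, so (2)$\Rightarrow$(3). If $\sigma(G)$ is as in (3) then the nonzero spectrum of $S$ consists of $n$ copies of $\lambda$; being a full-rank self-adjoint $n\times n$ matrix with this spectrum, $S$ must equal $\lambda I_n$, so (3)$\Rightarrow$(1).

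For (4) $\Leftrightarrow$ (5) I would exploit only that $\tilde G$ is a Gramian, hence positive semidefinite: $\mathbf 1:=(1,\dots,1)^T\in\operatorname{null}(\tilde G)$ iff $\mathbf 1^T\tilde G\mathbf 1=0$, and $\mathbf 1^T\tilde G\mathbf 1=\bigl\|\sum_{i=1}^k\tilde f_i\bigr\|^2$, so this holds iff $\sum_{i=1}^k\tilde f_i=0$. The remaining link, (1) $\Leftrightarrow$ (4), is the heart of the matter and the step I expect to demand the most care, because it is where the explicit form of the diagram vectors actually enters; it is in fact precisely the main theorem of \cite{REU11}. I would either cite that theorem or reprove it from the identity
\[
\langle \tilde f_i,\tilde f_j\rangle=\frac{n\,|\langle f_i,f_j\rangle|^2-\|f_i\|^2\|f_j\|^2}{n-1},
\]
which holds verbatim over $\R$ and over $\C$ and is verified by expanding the difference-of-squares block and the product block of the diagram vectors separately.

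Granting the identity, summing it over $i$ and $j$ and using $\sum_{i,j}|\langle f_i,f_j\rangle|^2=\operatorname{tr}(G^2)=\operatorname{tr}(S^2)$ and $\sum_i\|f_i\|^2=\operatorname{tr} S$ gives $\bigl\|\sum_i\tilde f_i\bigr\|^2=\bigl(n\operatorname{tr}(S^2)-(\operatorname{tr} S)^2\bigr)/(n-1)$, which vanishes when $S=\lambda I_n$; hence (1)$\Rightarrow$(4). Conversely, rewriting the identity with an arbitrary $x$ in place of $f_j$ as $|\langle f_i,x\rangle|^2=\bigl((n-1)\langle\tilde f_i,\tilde x\rangle+\|f_i\|^2\|x\|^2\bigr)/n$ and summing over $i$, condition (4) yields $\langle Sx,x\rangle=\tfrac{1}{n}(\operatorname{tr} S)\|x\|^2=\langle\tfrac{1}{n}(\operatorname{tr} S)I_n\,x,x\rangle$ for all $x$; since $S$ and $\tfrac{1}{n}(\operatorname{tr} S)I_n$ are self-adjoint and positive semidefinite, Remark \ref{sym2} forces $S=\tfrac{1}{n}(\operatorname{tr} S)I_n$, and $\operatorname{tr} S=\sum_i\|f_i\|^2>0$ because not all $f_i$ vanish, so $\F$ is a tight frame; hence (4)$\Rightarrow$(1). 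Chaining the three groups then gives the equivalence of all five statements.
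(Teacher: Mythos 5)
Your proof is correct, and it takes a genuinely more self-contained route than the paper's. For the spectral equivalences (1)--(3) the content is essentially the same: the paper proves (2)$\Rightarrow$(1) directly (via surjectivity of $\theta^*$ and Remark \ref{sym2}) and obtains (3) from the fact that $G$ and $S$ share their nonzero spectrum, while you run the cycle (1)$\Rightarrow$(2)$\Rightarrow$(3)$\Rightarrow$(1) using diagonalizability of $G$; both are routine. The real divergence is in the diagram-vector conditions: the paper simply cites Propositions 2.4, 2.8 and 4.2 of \cite{REU11} for (1)$\Leftrightarrow$(4) and (1)$\Leftrightarrow$(5), whereas you reprove everything from the identity $\langle\tilde f_i,\tilde f_j\rangle=\bigl(n|\langle f_i,f_j\rangle|^2-\|f_i\|^2\|f_j\|^2\bigr)/(n-1)$ (which does hold verbatim over $\R$ and $\C$, as a direct expansion of the two blocks confirms), and you link (4)$\Leftrightarrow$(5) directly through positive semidefiniteness of $\tilde G$ via $\mathbf{1}^*\tilde G\mathbf{1}=\|\sum_i\tilde f_i\|^2$ rather than routing both through (1). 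Your approach buys independence from the external reference plus the quantitative byproduct $\|\sum_i\tilde f_i\|^2=\bigl(n\operatorname{tr}(S^2)-(\operatorname{tr}S)^2\bigr)/(n-1)$, which exhibits (4) as the equality case of Cauchy--Schwarz for the eigenvalues of $S$. Two small caveats, shared with the paper's statement rather than defects of your argument: the diagram vectors require $n\ge 2$, and since $\lambda$ is fixed in the hypotheses but absent from (4) and (5), the equivalence with (1) only makes sense when $\lambda$ is read as $\operatorname{tr}(S)/n$ --- which is exactly the constant your derivation produces.
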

\begin{proof}
Let $\theta^*$ and $S=\theta^* \theta$ be the synthesis and frame operators corresponding to $\Set{f_i}_{i=1}^k $, respectively. We recall that $S = \lambda I_n$ if and only if $\F$ is  a $\lambda$-tight frame.\\
(1) $\Rightarrow$ (2)
Since $\theta^* \theta = \lambda I_n$, we have  $\theta \theta^* \theta \theta^* = \theta \lambda I_n \theta^*$, which is equivalent to $G^2 = \lambda G$. Also, $rank(G) = rank(S) = n$.\\
(2) $\Rightarrow$ (1)
Since $rank(S) = rank(G) =n$ and $S$ is an $n\times n$ matrix, 
$S$ is invertible. Hence  $\theta^*$ is onto. 
Since $ G^2 = \lambda G $ is equivalent to   $\theta S \theta^* = \theta  \lambda I_n \theta^*$, we have  
$(\theta^*x)^* S  (\theta^*x) = (\theta^*x)^* \lambda I_n (\theta^*x)$ for all $ x \in \R^n ( \text{or } \C^n). $ 
By Remark \ref{sym2}, we conclude $S = \lambda I_n$.\\
(1) $\iff$ (3)
Since $S=\theta^* \theta$ is an $n\times n$ matrix and $G=\theta \theta^*$ is a $k \times k$ matrix, the result now  follows from $\sigma(S)  \cup (\underbrace{0,\ldots,0}_{k-n \text{ times}} )  = \sigma(G)$. \\
(1) $\iff$ (4)  follows from Proposition 2.4 and 2.8 in  \cite{REU11}. \\
(1) $\iff$ (5)  follows from Proposition 4.2  in  \cite{REU11}. \\
\end{proof}

The following result gives a necessary condition for the existence of tight subframes. 
For more details about tight subframes we refer the reader to \cite{LMO12}.  

\begin{proposition}\label{mtightsubframekless2n}
Let \(\mathcal{F}=\{f_i\}_{i=1}^k\) be a  tight frame for $\R^n (\text{or }\C^n)$. 
If \(\F\) has a tight subframe, then \(k \ge 2n\). 
\end{proposition}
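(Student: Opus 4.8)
The plan is to exploit the additivity of frame operators over a partition of the index set, together with the elementary fact that a positive multiple of the identity on $\Hn$ has full rank. Suppose $\mathcal{F}$ is a $\lambda$-tight frame and let $\{f_i\}_{i\in I}$ with $I\subsetneq\{1,\ldots,k\}$ be a (proper) tight subframe, say $\mu$-tight with $\mu>0$. Let $S$, $S_I$, $S_{I^C}$ denote the frame operators of $\mathcal{F}$, $\{f_i\}_{i\in I}$, and $\{f_i\}_{i\in I^C}$ respectively. Since $Sf=\sum_{i\in I}\langle f,f_i\rangle f_i+\sum_{i\in I^C}\langle f,f_i\rangle f_i$ for every $f\in\Hn$, we have $S=S_I+S_{I^C}$, and using that $\mathcal{F}$ and $\{f_i\}_{i\in I}$ are tight this becomes $\lambda I_n=\mu I_n+S_{I^C}$, that is, $S_{I^C}=(\lambda-\mu)I_n$.

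Next I would first reduce to the case that $\mathcal{F}$ contains no zero vector: deleting zero vectors changes neither the tightness of $\mathcal{F}$ nor the existence of a proper tight subframe (a tight subframe with its zero vectors discarded is the same tight frame, indexed by a still proper subset once the zeros are gone), and it only decreases $k$, so it suffices to prove $k\ge 2n$ in this case. Now $I^C\neq\emptyset$, so $\operatorname{trace}(S_{I^C})=\sum_{i\in I^C}\|f_i\|^2>0$; comparing traces in $S_{I^C}=(\lambda-\mu)I_n$ forces $\lambda>\mu$. Hence $S_{I^C}=(\lambda-\mu)I_n$ is positive definite, in particular invertible, so $\{f_i\}_{i\in I^C}$ is a $(\lambda-\mu)$-tight frame for $\Hn$ and therefore spans $\Hn$, giving $|I^C|\ge n$. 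Likewise $\{f_i\}_{i\in I}$ spans $\Hn$, so $|I|\ge n$, and consequently $k=|I|+|I^C|\ge 2n$.

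The only delicate point is the strict inequality $\lambda>\mu$: if zero vectors were permitted one could have $\lambda=\mu$ with $\{f_i\}_{i\in I^C}$ consisting entirely of zero vectors (for instance $\{e_1,\dots,e_n,0\}$ in $\R^n$), and the bound would fail, so the reduction to the zero-free situation — equivalently, the fact that the removed block is not all zero — is precisely what makes the argument go through. Everything else is the routine observation that frame operators add over a partition and that $(\lambda-\mu)I_n$ with $\lambda>\mu$ has rank $n$. One could equally run the argument through Proposition~\ref{tight_char}(4): from $\sum_{i=1}^k\tilde f_i=0$ and $\sum_{i\in I}\tilde f_i=0$ one gets $\sum_{i\in I^C}\tilde f_i=0$, whence by Proposition~\ref{tight_char} either $\{f_i\}_{i\in I^C}$ is a tight frame or all those vectors vanish; after discarding zero vectors the former holds and again $|I^C|\ge n$.
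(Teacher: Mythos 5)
Your argument is correct and reaches the conclusion by a genuinely different route from the paper. The paper's proof is exactly the diagram-vector one you sketch in your final sentence: from \(\sum_{i=1}^k\tilde f_i=0\) and \(\sum_{i\in I}\tilde f_i=0\) it concludes via condition (4) of Proposition \ref{tight_char} that \(\{f_i\}_{i\in I^C}\) is also a tight frame, so both pieces span \(\Hn\) and \(k=|I|+|I^C|\ge 2n\). Your primary argument instead uses additivity of frame operators over a partition, \(S=S_I+S_{I^C}\), to get \(S_{I^C}=(\lambda-\mu)I_n\) directly. This is self-contained (no diagram-vector machinery), it makes explicit via the trace computation why \(\lambda>\mu\), and it yields the extra information that the complement is a \((\lambda-\mu)\)-tight frame. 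The two proofs are two languages for the same fact, but yours is the more elementary and the more careful of the two.

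One caveat: your reduction to the zero-free case does not quite do what you claim. If \(I^C\) consists entirely of zero vectors, then after discarding zeros the tight subframe becomes \emph{all} of the reduced frame, so it is no longer proper and the zero-free case never applies --- and indeed \(\{e_1,\ldots,e_n,0\}\) is an outright counterexample to the proposition as literally stated (a Parseval frame with proper tight subframe \(\{e_1,\ldots,e_n\}\) and \(k=n+1<2n\)). So the fact that the removed block contains a nonzero vector cannot be obtained for free by a reduction; it has to be assumed. This is not a defect relative to the paper: the paper's proof makes the same silent assumption when it applies Proposition \ref{tight_char} (whose hypothesis requires that not all vectors be zero) to \(\{f_i\}_{i\in I^C}\). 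You have in fact isolated the precise degenerate case both proofs must exclude; just record it as a standing hypothesis (e.g.\ that \(\F\) contains no zero vectors) rather than as a reduction.
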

\begin{proof}
If there exists a tight subframe $\{f_i\}_{i \in I} \subseteq \F$, then since 
\(\sum_{i=1}^k\tilde{f_i}=0\) and \(\sum_{i\in I} \tilde{f_i}=0\), by condition (4) of Proposition  \ref{tight_char}, 
\( \{f_i\}_{i \in I^C} \) is also a tight subframe. This implies that 
\( k = |I| +|I^C| \ge 2n \) since each subframe must span $\Hn$. 
\end{proof}

We also observe that if $\f $ is a tight frame for \(\mathbb{R}^n ( \text{or } \C^n) \) with $n \ge 2$ and $k\ge n$, then nontrivial $(0,r), (r,0)$-surgery is impossible for $r = 1,2,\ldots,n-1$. 
The next proposition follows from this observation and  condition (5) of Proposition  \ref{tight_char}.  

%

\begin{proposition}\label{pro36}
If $\F = \f$ is a unit-norm  frame for $\R^n (\text{or }\C^n)$, then the following conditions are equivalent.
\begin{enumerate}
\item There exists a unit-norm tight subframe $\Set{f_j}_{j \in I}  \subseteq \F$ where 
\( I = \{ i_1, \cdots, i_m\}\).
\item An $(r,0)$-surgery on $\F$ which leaves a unit-norm tight frame is possible for some $r \in \{1, 2, \ldots,k-n\}$. 
\item Each row sum of  the Gramian of $\{\widetilde{f}_{i_1}, \widetilde{f}_{i_2},\ldots, \widetilde{f}_{i_m}\} $ is zero.
\end{enumerate}
\end{proposition}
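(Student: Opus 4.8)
\emph{Proof proposal.} The plan is to prove $(1)\Leftrightarrow(3)$ and $(1)\Leftrightarrow(2)$ separately, with Proposition \ref{tight_char} doing the translation into diagram vectors. First I would record the elementary fact that, for any finite list of vectors $\{v_a\}$, the $a$-th row sum of its Gramian $\big(\langle v_b,v_a\rangle\big)_{a,b}$ is $\big\langle \sum_b v_b,\,v_a\big\rangle$; hence all row sums vanish exactly when $\sum_b v_b$ is orthogonal to every $v_a$, and since $\sum_b v_b$ lies in their span this is equivalent to $\sum_b v_b=0$. Applying this with $v_a=\widetilde f_{i_a}$ turns condition (3) into the statement $\sum_{j\in I}\widetilde f_j=0$, which is exactly condition (5) of Proposition \ref{tight_char} applied to the sequence $\{f_j\}_{j\in I}$. (Throughout I read the index set $I$ in (1) and (3) as a \emph{proper} subset $I\subsetneq\n{k}$, as the surgery context intends; without this an orthonormal basis would satisfy (1) and (3) yet not (2).)

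\emph{$(1)\Leftrightarrow(3)$.} Write $|I|=m$. Since $\F$ is unit-norm, the $f_j$, $j\in I$, are nonzero. If $m<n$, then $(1)$ fails because $m$ vectors cannot span $\Hn$; and $(3)$ fails too, since a glance at the coordinates of the diagram vectors shows that $\sum_{j\in I}\widetilde f_j=0$ would force $\sum_{j\in I}f_jf_j^{*}$ to have zero off-diagonal entries and constant diagonal, hence (its trace being $m$) to equal $\tfrac{m}{n}I_n$, which is invertible and so cannot be a sum of $m<n$ rank-one operators. So we may assume $m\ge n$, and then the equivalence $(1)\Leftrightarrow(4)$ of Proposition \ref{tight_char}, applied to the nonzero sequence $\{f_j\}_{j\in I}$, says precisely that $\sum_{j\in I}\widetilde f_j=0$ if and only if $\{f_j\}_{j\in I}$ is a tight frame for $\Hn$; as these vectors are unit-norm, that is condition (1). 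Together with the first paragraph this gives $(1)\Leftrightarrow(3)$.

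\emph{$(1)\Leftrightarrow(2)$.} This is Definition \ref{defsub} read in each direction. If a unit-norm $(r,0)$-surgery on $\F$ leaving a tight frame is possible, with deleted index set $J$ of size $r$, then by definition $\{f_i\}_{i\in J^{C}}$ is a unit-norm tight frame, i.e.\ a unit-norm tight subframe, so (1) holds (with index set $J^{C}$). Conversely, if $\{f_j\}_{j\in I}$ is a unit-norm tight subframe with $|I|=m$, then deleting the $k-m$ vectors indexed by $I^{C}$ is a $(k-m,0)$-surgery whose output is the unit-norm tight frame $\{f_j\}_{j\in I}$; a frame must span, so $m\ge n$ and hence $r:=k-m\le k-n$, while $r\ge 1$ because $I$ is proper. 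Thus $r\in\{1,2,\dots,k-n\}$ and (2) holds.

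The only real content is the diagram-vector dictionary of Proposition \ref{tight_char}; what takes a little care is the bookkeeping — excluding $m<n$ so that Proposition \ref{tight_char} may be invoked on $\{f_j\}_{j\in I}$, and checking that the deletion count $k-m$ lands in $\{1,\dots,k-n\}$. I do not anticipate a genuine obstacle beyond this, and the observation preceding the proposition (a unit-norm tight frame admits no tight subframe obtained by deleting fewer than $n$ vectors), together with Proposition \ref{mtightsubframekless2n}, explains why the degenerate case $m=k$ is harmlessly set aside.
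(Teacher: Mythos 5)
Your proposal is correct and follows essentially the route the paper itself indicates: the paper gives no detailed proof, only the remark that the result ``follows from condition (5) of Proposition \ref{tight_char},'' and your argument is exactly the fleshed-out version of that observation (vanishing row sums of the Gramian of the diagram vectors means $(1,\dots,1)^T$ lies in its null space, equivalently $\sum_{j\in I}\widetilde f_j=0$, which is tightness of the subframe), together with the definitional unwinding of $(1)\Leftrightarrow(2)$. Your extra care with the edge cases ($m<n$, and reading $I$ as a proper subset so that $r\ge 1$ in condition (2)) addresses genuine imprecisions that the paper's terse justification glosses over.
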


Next, we provide a generalization of the necessary condition for $(p,q)$-surgery presented in \cite{narayan}.

\begin{theorem}
Suppose $\F = \f$ is a unit-norm tight frame for $\R^n (\text{or }\C^n)$. If a $(p,q)$-surgery on $\F$ which leaves a unit-norm tight frame is possible then the sum of the entries of the Gramian $(\langle \widetilde{f}_j, \widetilde{f}_i \rangle )_{i,j=1}^{k-p}$ is bounded above by $q^2$, where 
$\widetilde{f}_1,\ldots,\widetilde{f}_{k-p}$ denote the diagram vectors that remain after removing $p$ vectors from $\F$. 
\end{theorem}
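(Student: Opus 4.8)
The plan is to use the diagram vector characterization of tight frames from Proposition \ref{tight_char} and a positivity argument on the Gramian of the diagram vectors. Suppose a $(p,q)$-surgery is possible, so that there exist new unit-norm vectors $g_1,\ldots,g_q \in \R^n$ (or $\C^n$) such that, after relabelling, $\{f_i\}_{i=1}^{k-p} \cup \{g_j\}_{j=1}^q$ is a unit-norm tight frame. By condition (4) of Proposition \ref{tight_char} applied to this new frame, the diagram vectors satisfy
\[
\sum_{i=1}^{k-p} \widetilde{f}_i + \sum_{j=1}^q \widetilde{g}_j = 0,
\]
so that $\sum_{i=1}^{k-p} \widetilde{f}_i = -\sum_{j=1}^q \widetilde{g}_j$. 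Denote this common vector by $v$.

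Next I would estimate $\|v\|^2$ in two ways. On one hand,
\[
\|v\|^2 = \Bigl\| \sum_{i=1}^{k-p} \widetilde{f}_i \Bigr\|^2 = \sum_{i,j=1}^{k-p} \langle \widetilde{f}_j, \widetilde{f}_i \rangle,
\]
which is exactly the sum of the entries of the Gramian $(\langle \widetilde{f}_j, \widetilde{f}_i\rangle)_{i,j=1}^{k-p}$ that appears in the statement. On the other hand,
\[
\|v\|^2 = \Bigl\| \sum_{j=1}^q \widetilde{g}_j \Bigr\|^2 \le \Bigl( \sum_{j=1}^q \|\widetilde{g}_j\| \Bigr)^2,
\]
by the triangle inequality. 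The key input now is that for a unit-norm vector $g$, the diagram vector $\widetilde{g}$ also has norm one: a direct computation with the formula in Definition \ref{Jdefn:RnDV} shows $\|\widetilde{g}\|^2 = \|g\|^4 = 1$ when $\|g\| = 1$ (this identity, $\|\widetilde{f}\| = \|f\|^2$, is one of the basic properties of diagram vectors from \cite{REU11} and can be checked by expanding the difference-of-squares and cross-product blocks). Hence $\sum_{j=1}^q \|\widetilde{g}_j\| = q$, and combining the two computations gives
\[
\sum_{i,j=1}^{k-p} \langle \widetilde{f}_j, \widetilde{f}_i \rangle = \|v\|^2 \le q^2,
\]
which is the claimed bound.

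The only real content beyond bookkeeping is the norm identity $\|\widetilde{g}\| = \|g\|^2$ for the diagram vector; everything else is the triangle inequality and the already-established equivalence (1) $\iff$ (4) of Proposition \ref{tight_char}. I expect the main point to verify carefully is that this identity holds in both the real and complex cases (the complex diagram vector has the $\sqrt{n}$ normalization on $3n(n-1)/2$ coordinates rather than $\sqrt{2n}$ on $n(n-1)$, but the sum still telescopes to $\|g\|^4$), and that the relabelling "removing $p$ vectors from $\F$" is consistent with indexing the remaining diagram vectors as $\widetilde{f}_1,\ldots,\widetilde{f}_{k-p}$. No compactness or spark hypotheses are needed; the argument is purely algebraic.
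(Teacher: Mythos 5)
Your proof is correct and follows essentially the same route as the paper: set $W=\sum_{i=1}^{k-p}\widetilde{f}_i$, use the vanishing of the sum of diagram vectors for the new tight frame together with $\|\widetilde{g}_j\|=\|g_j\|^2=1$ to get $\|W\|\le q$, and expand $\|W\|^2$ as the sum of the Gramian entries. In fact you spell out the triangle-inequality step and the norm identity $\|\widetilde{g}\|=\|g\|^2$ more explicitly than the paper does.
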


\begin{proof}
Let $W = \widetilde{f}_1 + \cdots + \widetilde{f}_{k-p}.$ If a $(p,q)$-surgery on $\F$ is possible, then $\|W\| \leq q$ since $  \| \widetilde{f}_i \|= \norm{f}$. Note that
\begin{align*}
\|W\|^2 &= \langle \widetilde{f}_1 + \cdots + \widetilde{f}_{k-p}, \widetilde{f}_1 + \cdots + \widetilde{f}_{k-p} \rangle\\
&= \sum_{i, j=1}^{k-p} \langle \widetilde{f}_i ,\widetilde{f}_j\rangle.
\end{align*}

Thus, if $(p,q)$-surgery is possible, then the sum of the entries of the Gramian $(\langle \widetilde{f}_j, \widetilde{f}_i \rangle )_{i,j=1}^{k-p}$ is bounded above by $q^2$.
\end{proof}

We conclude our results on general surgeries on frames with the following theorem.

\begin{theorem}\label{mpqsurg1}
If  $(p,q)$-surgery on a frame  $\f \subseteq \Hn$ is possible, then $(r,r-p+q)$-surgery is possible for $r = p+1,p+2,\ldots,k$.
\end{theorem}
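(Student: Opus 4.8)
The plan is to leverage the $(p,q)$-surgery that we already have and then perform a small amount of additional ``padding'' surgery on the resulting tight frame so as to reach the target $(r, r-p+q)$-surgery. Concretely, suppose $(p,q)$-surgery on $\f$ is possible, so there is an index set $I \subseteq \n{k}$ with $|I| = p$ and a collection $\{g_j\}_{j=1}^q \subseteq \Hn$ such that $\mathcal{G} := \{f_i\}_{i \in I^C} \cup \{g_j\}_{j=1}^q$ is a tight frame for $\Hn$. Fix $r \in \{p+1, \ldots, k\}$ and set $s := r - p \ge 1$; note $s \le k - p = |I^C|$, so we may choose a subset $J \subseteq I^C$ with $|J| = s$. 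The idea is to remove those $s$ additional $f_i$'s ($i \in J$) from the frame $\f$ — giving a total of $|I| + |J| = p + s = r$ removed vectors — and then re-add them to the tight frame $\mathcal{G}$ as part of the new collection of inserted vectors.

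The key observation is that appending any finite list of vectors to a $\lambda$-tight frame and then rescaling does not preserve tightness in general, so we cannot simply throw the $f_i$'s back in. Instead I would argue as follows. Start from $\mathcal{G}$, which is $\lambda$-tight for some $\lambda > 0$. Its frame operator is $\lambda I_n$. Now consider the enlarged sequence $\mathcal{G} \cup \{f_i\}_{i \in J}$; its frame operator is $\lambda I_n + \sum_{i \in J}\langle \cdot, f_i\rangle f_i =: T$, a positive-definite operator. By the standard fact that every frame can be ``completed'' to a tight frame (indeed, one can add finitely many vectors whose frame operator is $cI_n - T$ for $c$ large — this is exactly the construction referenced via \cite{classes} in the paper), there exist vectors $h_1, \ldots, h_t$ with $\sum_{\ell} \langle \cdot, h_\ell\rangle h_\ell = cI_n - T$ for suitable $c$, making $\mathcal{G} \cup \{f_i\}_{i \in J} \cup \{h_\ell\}_{\ell=1}^t$ a $c$-tight frame. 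The subtle point is controlling the count $t$: the new frame has $(k - p + q) + s + t$ vectors total, and it must equal $(k - r) + (r - p + q) = k - p + q$ vectors for a genuine $(r, r-p+q)$-surgery, which forces $t = -s < 0$ — impossible. So this naive completion does not work, and the argument must be more careful about cardinality.

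The correct approach is therefore to re-examine what ``$(r, r-p+q)$-surgery possible'' requires: we remove $r$ vectors from $\f$, leaving $k - r$ of the original vectors, and insert $r - p + q$ new ones, for a total of $k - p + q$ vectors — the same total as $\mathcal{G}$. So I would instead exhibit the target frame directly as $\{f_i\}_{i \in (I \cup J)^C} \cup \bigl(\{g_j\}_{j=1}^q \cup \{f_i\}_{i \in J}\bigr)$, i.e. remove the $r$ indices in $I \cup J$ from $\f$ and declare the $r - p + q = q + s$ inserted vectors to be the original $g_j$'s together with the $f_i$ for $i \in J$. But this set is, as a collection of vectors, exactly $\mathcal{G}$ again — the same multiset — hence it is a tight frame for $\Hn$. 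Thus the main step is simply to check the bookkeeping: $|I \cup J| = p + s = r$ (since $J \subseteq I^C$ makes the union disjoint), and $q + |J| = q + s = r - p + q$, and the resulting vector collection coincides with $\mathcal{G}$, which is tight by hypothesis. In the unit-norm case, note that the re-inserted $f_i$ are unit-norm since $\f$ was (and the original $g_j$ were required unit-norm), so the unit-norm constraint is preserved automatically.

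The main obstacle, then, is not any analytic difficulty but getting the combinatorial identification exactly right: one must verify that $J$ can always be chosen inside $I^C$ (which needs $r - p \le k - p$, i.e. $r \le k$, exactly the stated range), that the union $I \cup J$ is disjoint so the removed count is genuinely $r$, and that the ``new'' inserted list has the prescribed length $r - p + q$ while the total collection is literally the old tight frame $\mathcal{G}$. Once these are in place the tightness is inherited for free from the $(p,q)$-surgery, and the proof is complete.
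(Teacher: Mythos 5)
Your proposal is correct and, after discarding the ``completion'' detour in the middle, arrives at exactly the paper's argument: remove $r-p$ additional original vectors and re-insert them alongside the $g_j$'s, so that the resulting collection is literally the tight frame produced by the $(p,q)$-surgery. The paper does the same thing with a without-loss-of-generality relabeling of indices rather than your explicit sets $I$ and $J$.
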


\begin{proof}
Suppose that  $(p,q)$-surgery on $\f$ is possible. Without loss of generality, let \( \{f_1,\ldots,f_{k-p} \}\) be the set of   vectors remaining after  removal of $p$ vectors  and let $\{g_1,\ldots,g_q \}$ be the $q$ vectors added to $\{f_1,\ldots,f_{k-p}\}$ so that $\{f_i\}_{i=1}^{k-p} \cup \{g_j\}_{j=1}^q$ is a tight frame. Let $r \in \{p+1,p+2,\ldots,k\}$. Then $(r,r-p+q)$-surgery on a frame $\f \subseteq \Hn$  is possible, by excising \(\{f_{k-r+1},f_{k-r+2},\ldots,f_k\}\), 
and then adding back the $r-p + q $ vectors $\{f_{k-r+1},f_{k-r+2},\ldots,f_{k-p}\} \cup \{g_j\}_{j=1}^q$ so that
$$\{f_1,f_2,\ldots,f_{k-r}\} \cup \{f_{k-r+1},f_{k-r+2},\ldots,f_{k-p}\} \cup \{g_j\}_{j=1}^q $$
$$= \{f_i\}_{i=1}^{k-p} \cup \{g_j\}_{j=1}^q,$$
 which is tight.
\end{proof}

\section{Length surgery}

While frame surgery involves adding vectors to a frame or removing vectors from a frame, 
length surgery deals only with  the norms of  vectors in a frame.

\begin{definition}[\cite{narayan}]
\label{kLengthSurgDef}
Let $a_1$, $a_2$, $\ldots$, $a_k$ denote norms of $k$ vectors in $\Hn$.  A $\left(p, q\right)$-length surgery on $\left\{a_i\right\}^{k}_{i=1}$ removes $p$ numbers and replaces them with $q$ nonnegative numbers.
\end{definition}

An interesting question while  performing length surgery is whether the norms corresponding to the set $\left\{a_i\right\}^{k}_{i=1}$ or any modified set are norms of vectors that form a tight frame. 

\begin{definition}
\label{kTightFrameSetDef}
A set of numbers $\left\{a_i\right\}_{i=1}^k$ is a \textit{tight frame set} for $\Hn$ if $\left\{a_i\right\}_{i=1}^k$ contains the norms of vectors of a tight frame in $\Hn$.
\end{definition}

The following theorem describes exactly when a set of numbers is  a tight frame set for $\Hn$.

\begin{theorem}[\cite{physint}]
\label{kFundIneqThm}
Given an $n$-dimensional Hilbert space $\mathcal{H}_n$ and a sequence of positive scalars $\{a_i\}_{i=1}^k$, there exists a tight frame $\{f_i\}_{i=1}^k$ for $\mathcal{H}_n$ of lengths $\|f_i\|=a_i$ for all $i=1,\ldots,k$ if and only if
\begin{equation}\label{eq:fundIneq}
\max_i a_i^2 \leq \frac{1}{n}\sum_{i=1}^{k}a_i^2.
\end{equation}
\end{theorem}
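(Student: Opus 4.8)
The plan is to treat the two implications separately. The forward direction is a short trace computation; the converse is the substantive part, and I would reduce it, through the Gramian characterization of tight frames in Proposition~\ref{tight_char}, to a single application of the Schur--Horn theorem on the existence of a self-adjoint matrix with prescribed diagonal and spectrum.

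\emph{Necessity.} Suppose $\{f_i\}_{i=1}^k$ is a tight frame with $\|f_i\|=a_i$, say $\lambda$-tight. Taking the trace of the frame operator $S=\lambda I_n=\sum_i f_if_i^*$ gives $n\lambda=\sum_i\|f_i\|^2=\sum_i a_i^2$, so $\lambda=\tfrac1n\sum_i a_i^2$. For each $j$, $\lambda a_j^2=\langle Sf_j,f_j\rangle=\sum_i|\langle f_j,f_i\rangle|^2\ge\|f_j\|^4=a_j^4$, hence $a_j^2\le\lambda$, which is \eqref{eq:fundIneq}.

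\emph{Sufficiency.} First observe that \eqref{eq:fundIneq} forces $k\ge n$, since otherwise $\tfrac1n\sum_i a_i^2\le\tfrac kn\max_i a_i^2<\max_i a_i^2$. Set $\lambda=\tfrac1n\sum_i a_i^2$. By the equivalence $(1)\Leftrightarrow(3)$ of Proposition~\ref{tight_char}, producing a $\lambda$-tight frame $\{f_i\}_{i=1}^k$ for $\Hn$ with $\|f_i\|=a_i$ is the same as producing vectors $f_1,\dots,f_k\in\Hn$ whose Gramian $G$ has diagonal $(a_1^2,\dots,a_k^2)$ and spectrum $(\underbrace{\lambda,\dots,\lambda}_{n},\underbrace{0,\dots,0}_{k-n})$. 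Conversely, every positive semidefinite $k\times k$ matrix $G$ of rank $n$ factors as $G=\theta\theta^*$ with $\theta$ a $k\times n$ matrix, whose rows $f_1,\dots,f_k\in\Hn$ satisfy $\langle f_i,f_j\rangle=G_{ij}$; in particular $\|f_i\|^2=G_{ii}$, and by Proposition~\ref{tight_char} again $\{f_i\}$ is $\lambda$-tight precisely when $\sigma(G)$ is as above. Thus the whole problem reduces to: \emph{build a positive semidefinite $k\times k$ matrix (real symmetric if $\Hn$ is real, Hermitian if complex) with diagonal $(a_1^2,\dots,a_k^2)$ and eigenvalue list $(\lambda,\dots,\lambda,0,\dots,0)$.} This is exactly the setting of the Schur--Horn theorem, by which such a matrix exists if and only if the diagonal sequence is majorized by the eigenvalue sequence. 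Sorting $a_{(1)}^2\ge\cdots\ge a_{(k)}^2$, the conditions to verify are $\sum_{i=1}^m a_{(i)}^2\le\min(m,n)\,\lambda$ for all $m$, with equality at $m=k$: equality at $m=k$ is the definition of $\lambda$; for $m\ge n$ the left side is at most $\sum_{i=1}^k a_{(i)}^2=n\lambda$; and for $m<n$ we have $\sum_{i=1}^m a_{(i)}^2\le m\,a_{(1)}^2\le m\lambda$, the last step being precisely \eqref{eq:fundIneq}. Hence the majorization holds and the desired $G$ exists.

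\emph{Main obstacle.} The one step that is not routine is the existence half of Schur--Horn; everything else above is bookkeeping once Proposition~\ref{tight_char} is in hand. If one wants a self-contained argument, Schur--Horn for this two-valued spectrum can be proved constructively: start from $\operatorname{diag}(\lambda,\dots,\lambda,0,\dots,0)$ and apply a sequence of $2\times2$ Givens rotations, each redistributing mass between two diagonal entries while preserving the spectrum, until the diagonal equals $(a_1^2,\dots,a_k^2)$; in frame language each rotation is a frame-operator-preserving adjustment of a pair of frame vectors, and the degenerate case of two parallel vectors is the elementary move of merging vectors of norms $b,c$ into one of norm $\sqrt{b^2+c^2}$ (and its reverse). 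Carrying this out carefully --- in particular organizing the rotations so that the process terminates, which is where the majorization inequalities are actually used --- amounts to a rederivation of the classical result, so for the paper it is cleanest to invoke it.
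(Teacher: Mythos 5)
The paper offers no proof of this statement: it is quoted verbatim from \cite{physint} as a known result (the ``fundamental inequality''), so there is no in-paper argument to compare against. Your proof is correct and complete modulo the one classical input you explicitly flag. The necessity half is the standard trace computation, and the step $\lambda a_j^2=\sum_i|\langle f_j,f_i\rangle|^2\ge a_j^4$ is valid because the $i=j$ term alone already gives $\|f_j\|^4$ and the scalars are positive, so one may divide by $a_j^2$. The sufficiency half correctly reduces, via the Gramian factorization $G=\theta\theta^*$ and the equivalence $(1)\Leftrightarrow(3)$ of Proposition~\ref{tight_char}, to the existence of a Hermitian (or real symmetric) matrix with diagonal $(a_1^2,\dots,a_k^2)$ and spectrum $(\lambda,\dots,\lambda,0,\dots,0)$, and your verification of the majorization is right: the partial sums with $m<n$ are exactly where \eqref{eq:fundIneq} is used, the $m\ge n$ sums are trivial, and equality at $m=k$ is the definition of $\lambda$; the preliminary observation that \eqref{eq:fundIneq} forces $k\ge n$ is also needed and correctly supplied. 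The only substantive dependency is Horn's converse to the Schur majorization theorem, which you acknowledge and for which you sketch the constructive Givens-rotation argument. For comparison, the source \cite{physint} and most frame-theory texts prove this result either by minimizing the frame potential subject to the prescribed norms or by an explicit induction on $k$ that builds the frame vectors directly; your Schur--Horn route is an equally standard and arguably cleaner packaging of the same combinatorial content, and it has the advantage of meshing directly with the Gramian viewpoint already set up in Proposition~\ref{tight_char}.
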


Due to its profound importance, the inequality \eqref{eq:fundIneq} is often called the \emph{fundamental inequality} \cite{ffu}. 
An immediate consequence of Theorem \ref{kFundIneqThm} is that 
$\left\{a_i\right\}_{i=1}^n$ is a tight frame set for $\Hn$ if and only if 
$a_1 = \cdots = a_n$. 

When performing $(p, q)$-length surgery, we would like to know when length surgery resulting in a tight frame set for $\Hn$ is possible, and furthermore, specifically what  numbers can be added to a set of nonnegative  numbers  for it to become or remain a tight frame set.  

\begin{proposition}\label{rmk1}
Suppose that  $\left\{a_i\right\}_{i=1}^k$ is a tight frame set for $\Hn$ and let $ m_q:= \max_{i=q+1, \cdots, k} a_i^2 $ for $q=1,2,\ldots,k-1$. Then we have that 
$$  m_q \leq \frac{1}{n-q}\sum_{i=q+1}^{k}a_i^2.$$
In particular, we have
$$ nm_1 -  \sum_{i=2}^{k}a_i^2 \le m_1 \leq \frac{1}{n-1}\sum_{i=2}^{k}a_i^2.$$
\end{proposition}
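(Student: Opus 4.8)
The plan is to derive the proposition directly from the fundamental inequality (Theorem~\ref{kFundIneqThm}), with no additional frame-theoretic input. Since $\{a_i\}_{i=1}^k$ is a tight frame set for $\Hn$, Theorem~\ref{kFundIneqThm} gives $n\,\max_{1\le i\le k}a_i^2\le\sum_{i=1}^k a_i^2$. I would split the right-hand side as $\sum_{i=1}^k a_i^2=\sum_{i=1}^q a_i^2+\sum_{i=q+1}^k a_i^2$ and bound the first block by the trivial estimate $\sum_{i=1}^q a_i^2\le q\,\max_{1\le i\le k}a_i^2$ (each of the $q$ terms is at most the global maximum). Substituting and collecting the $\max$ terms on the left yields $(n-q)\,\max_{1\le i\le k}a_i^2\le\sum_{i=q+1}^k a_i^2$.

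To finish, I would note that $m_q=\max_{q+1\le i\le k}a_i^2\le\max_{1\le i\le k}a_i^2$, so that, provided $n-q>0$, multiplying by $n-q$ and chaining with the previous inequality gives
\[
(n-q)\,m_q\;\le\;(n-q)\,\max_{1\le i\le k}a_i^2\;\le\;\sum_{i=q+1}^k a_i^2 ,
\]
which is exactly $m_q\le\frac{1}{n-q}\sum_{i=q+1}^k a_i^2$. Equivalently, one may phrase the conclusion as: $\{a_i\}_{i=q+1}^k$ is itself a tight frame set for $\mathcal{H}_{n-q}$, again by Theorem~\ref{kFundIneqThm}. The ``in particular'' assertion is then just the case $q=1$: the right-hand inequality is that case verbatim, and the left-hand inequality $n m_1-\sum_{i=2}^k a_i^2\le m_1$ is merely the rearrangement $(n-1)m_1\le\sum_{i=2}^k a_i^2$ of the same statement.

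I do not anticipate a substantive obstacle; the argument is an elementary manipulation of the fundamental inequality, the only nontrivial move being the crude bound $\sum_{i=1}^q a_i^2\le q\max_i a_i^2$ on the removed block. The one point requiring care is the admissible range of $q$: the displayed bound is meaningful only when $n-q>0$, so the statement should be read for $q\in\{1,\ldots,\min(n-1,k-1)\}$, and I would insert a sentence making this explicit (for $q\ge n$ the coefficient $\tfrac{1}{n-q}$ is nonpositive while $m_q>0$, so the inequality as written cannot hold for positive scalars).
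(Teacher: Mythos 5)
Your proof is correct and follows essentially the same route as the paper's: both start from the fundamental inequality, bound the removed block by $q\max_i a_i^2$, use $m_q\le\max_i a_i^2$, and divide by $n-q$, with the ``in particular'' part being the $q=1$ case rearranged. Your observation that the bound is only meaningful for $q<n$ is a fair point that the paper leaves implicit.
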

\begin{proof}
By  Theorem \ref{kFundIneqThm}, we have 
\( \max_i a_i^2 \leq \frac{1}{n}\sum_{i=1}^{k}a_i^2\), which is equivalent to 
\( n \gp{ \max_i a_i^2} -  \sum_{i=1}^{q}a_i^2  \le  \sum_{i=q+1}^{k}a_i^2  \). 
Since 
\( n\gp{ \max_i a_i^2}-\sum_{i=1}^{q}a_i^2 \ge n\gp{ \max_i a_i^2} - q \gp{ \max_i a_i^2} \ge (n-q)m_q  \), we have 
\( m_q \le   \  \frac{1}{n-q}\sum_{i=q+1}^{k}a_i^2\).  
The left inequality for $m_1$ is  equivalent to 
\( m_1 \le   \  \frac{1}{n-1}\sum_{i=2}^{k}a_i^2\). 
\end{proof}

In the following theorem we state what nonnegative  numbers \(b\) should replace $a_1$ in a tight frame set 
$\left\{a_i\right\}_{i=1}^k$ so that it remains a tight frame set. 

\begin{theorem}\label{pro_tfs}
Suppose that $\left\{a_i\right\}_{i=1}^k$ is a tight frame set for $\Hn$ and $b$ is a nonnegative  scalar. Then $\Set{b, a_2, \cdots, a_k}$ is a tight frame set for $\Hn$ if and only if $ b^2 \in \bk{  nm - \sum_{i=2}^{k}a_i^2, \  \frac{1}{n-1}\sum_{i=2}^{k}a_i^2}   $, where   $ m:= \max_{i=2, \cdots, k} a_i^2. $
\end{theorem}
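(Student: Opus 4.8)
The plan is to reduce the statement to the fundamental inequality (Theorem \ref{kFundIneqThm}) and then carry out the short algebra that converts that inequality into membership in the stated interval. Throughout I would write $\Sigma := \sum_{i=2}^{k} a_i^2$ and $m := \max_{i=2,\ldots,k} a_i^2$, so that the interval in question is $[\,nm-\Sigma,\ \tfrac{1}{n-1}\Sigma\,]$.

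First I would dispose of the preliminaries. We may assume $n\ge 2$, since for $n=1$ the right endpoint $\tfrac{1}{n-1}\Sigma$ is not even defined (and in that case the corresponding half of the inequality below is vacuous). Since $\{a_i\}_{i=1}^{k}$ is a tight frame set for $\Hn$, the underlying tight frame spans $\Hn$, hence at least $n$ of the $a_i$ are nonzero; deleting the single entry $a_1$ leaves at least $n-1\ge 1$ nonzero entries, so $\Sigma>0$. I would also record the mild extension of Theorem \ref{kFundIneqThm} to nonnegative (rather than strictly positive) lengths, needed because $b$ may be $0$ and some $a_i$ may already be $0$: appending or deleting a zero vector changes neither tightness nor the span, so for a finite family of nonnegative scalars, not all zero, a tight frame with those norms exists if and only if the fundamental inequality holds. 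Applying this to $\{b,a_2,\ldots,a_k\}$, whose squared entries sum to $b^2+\Sigma>0$, gives: $\{b,a_2,\ldots,a_k\}$ is a tight frame set for $\Hn$ if and only if $\max\{b^2,m\}\le \tfrac1n(b^2+\Sigma)$.

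The second step is the algebra. I would split $\max\{b^2,m\}\le \tfrac1n(b^2+\Sigma)$ into its two halves. Multiplying by $n$ and subtracting $b^2$, the inequality $b^2\le \tfrac1n(b^2+\Sigma)$ is equivalent to $(n-1)b^2\le\Sigma$, i.e.\ (using $n-1>0$) to $b^2\le \tfrac{1}{n-1}\Sigma$. Similarly, $m\le \tfrac1n(b^2+\Sigma)$ is equivalent to $nm-\Sigma\le b^2$. Each step is reversible, so conjoining the two halves yields exactly $b^2\in[\,nm-\Sigma,\ \tfrac{1}{n-1}\Sigma\,]$, which is the assertion.

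Finally, as a consistency check I would invoke Proposition \ref{rmk1} with $q=1$, so that $m_1=m$: its ``in particular'' clause reads $nm-\Sigma\le m\le\tfrac1{n-1}\Sigma$, showing the interval is nonempty and contains $m$; in particular $b=a_1$ lies in it, as it must since we began from a tight frame set. I do not anticipate a genuine obstacle here: the core is a one-line invocation of Theorem \ref{kFundIneqThm} followed by routine manipulation of inequalities. The only points that deserve a sentence of care are the extension of Theorem \ref{kFundIneqThm} to nonnegative lengths (to accommodate $b=0$) and the observation $\Sigma>0$, both immediate from the hypothesis that $\{a_i\}_{i=1}^{k}$ is already a tight frame set for an $\Hn$ with $n\ge2$.
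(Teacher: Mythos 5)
Your proof is correct and follows essentially the same route as the paper: both reduce the claim to the fundamental inequality of Theorem \ref{kFundIneqThm} and then do elementary algebra. The only (cosmetic) difference is that you split $\max\{b^2,m\}\le\tfrac1n(b^2+\Sigma)$ into its two component inequalities, each yielding one endpoint of the interval, whereas the paper instead cases on whether $b^2\le m$ or $b^2>m$; your added care about $n\ge 2$ and about zero lengths addresses edge cases the paper passes over silently.
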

\begin{proof}
Let us assume $b^2 \le m$. Now,  
$ b^2 \ge nm - \sum_{i=2}^{k}a_i^2 $  is equivalent to  $ m \le \frac{1}{n} \gp{b^2 +  \sum_{i=2}^{k}a_i^2 }$. This is equivalent to  $\Set{b, a_2, \cdots, a_k}$ being a tight frame set for $\Hn$ by Theorem \ref{kFundIneqThm} since 
$ m = \max  \Set{b^2, a_2^2, \cdots, a_k^2}$. 
Suppose $b^2 >  m$. Now,  
$  b^2 \le  \frac{1}{n-1}\sum_{i=2}^{k}a_i^2 $, which is equivalent to  $ b^2 \le \frac{1}{n} \gp{b^2 +  \sum_{i=2}^{k}a_i^2 }$.  This is equivalent to  $\Set{b, a_2, \cdots, a_k}$ being a tight frame set for $\Hn$ by Theorem \ref{kFundIneqThm} since 
$ b^2 = \max  \Set{b^2, a_2^2, \cdots, a_k^2}$. \end{proof}

The previous theorem states when $\left(1, 1\right)$-length surgery resulting in a tight frame set is possible.   
The next theorem describes when $\left(p, q \right)$-length surgery resulting in a tight frame set is possible.   
This result is a restatement of Theorem 4.9 in \cite{narayan} which gives a condition for $\left(0, q \right)$-length surgery resulting in a tight frame set; the proof given here is different from that of \cite{narayan}.

\begin{theorem}
\label{p1_tfs} 
Let $n \geq 2$, $0 \leq p \leq k $ and $\{a_i\}_{i=1}^k$ be a sequence of positive scalars.  Then 
$\left(p, q \right)$-length surgery on $\left\{a_i\right\}_{i=1}^{k}$ resulting in a tight frame set is possible if and only if there exists a subset $\left\{b_j\right\}_{j=1}^{k-p}$ of $\left\{a_i\right\}_{i=1}^{k}$ with 
$ \max_{1\le j \le k-p}  b_j^2  \leq \frac{1}{n-q}\sum_{j=1}^{k-p} b_j^2. $
\end{theorem}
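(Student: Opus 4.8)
The plan is to reduce everything to the fundamental inequality \eqref{eq:fundIneq} of Theorem~\ref{kFundIneqThm}. First I would unwind the definitions: by Definitions~\ref{kLengthSurgDef} and~\ref{kTightFrameSetDef}, a $(p,q)$-length surgery on $\{a_i\}_{i=1}^k$ resulting in a tight frame set is \emph{exactly} the existence of a size-$(k-p)$ subset $\{b_j\}_{j=1}^{k-p}\subseteq\{a_i\}_{i=1}^k$ together with nonnegative scalars $c_1,\ldots,c_q$ such that $\{b_1,\ldots,b_{k-p},c_1,\ldots,c_q\}$ is a tight frame set for $\Hn$. Since adjoining zero vectors to a tight frame leaves it tight, and since zero entries contribute nothing to either side of \eqref{eq:fundIneq}, a finite multiset of nonnegative reals that is not identically zero is a tight frame set for $\Hn$ if and only if the fundamental inequality holds for it; this lets me apply \eqref{eq:fundIneq} directly even though the $c_\ell$ are only required to be nonnegative. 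Throughout I will assume $k-p\ge 1$, so that $B:=\sum_j b_j^2>0$; the degenerate case $p=k$ is treated separately, and there the convention $\max\emptyset=+\infty$ makes the stated equivalence hold.

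Next I would fix a candidate subset $\{b_j\}_{j=1}^{k-p}$ and put $B=\sum_j b_j^2$, $M=\max_j b_j^2$. The heart of the argument is the sub-claim: there exist $c_1,\ldots,c_q\ge 0$ making $\{b_j\}\cup\{c_\ell\}$ satisfy \eqref{eq:fundIneq} if and only if $(n-q)M\le B$ (equivalently $M\le\tfrac{1}{n-q}B$ when $q<n$, and automatically when $q\ge n$ since then $(n-q)M\le 0<B$). For the forward direction, given such $c_\ell$, set $S=\sum_\ell c_\ell^2$; inequality \eqref{eq:fundIneq} for the union reads $\max(M,\max_\ell c_\ell^2)\le\tfrac{1}{n}(B+S)$, which forces $nM\le B+S$ and, summing the bounds $c_\ell^2\le\tfrac1n(B+S)$ over $\ell$, also $(n-q)S\le qB$; eliminating $S$ gives $(n-q)M\le B$ (trivial when $q\ge n$). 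For the converse, when $(n-q)M\le B$ I would take $c_1=\cdots=c_q=\sqrt{M}$: then the union's maximum squared entry equals $M$ and its squared-norm sum equals $B+qM$, so \eqref{eq:fundIneq} becomes $M\le\tfrac1n(B+qM)$, i.e.\ $(n-q)M\le B$, which holds.

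Finally I would assemble the pieces: a $(p,q)$-length surgery resulting in a tight frame set is possible iff \emph{some} size-$(k-p)$ subset admits valid $c_\ell$'s, and by the sub-claim this occurs iff some size-$(k-p)$ subset $\{b_j\}$ satisfies $\max_{1\le j\le k-p} b_j^2\le\tfrac{1}{n-q}\sum_{j=1}^{k-p}b_j^2$, which is the assertion. I do not expect any single step to be a genuine obstacle; the points requiring care are (i) the passage from ``tight frame set with nonnegative entries'' to \eqref{eq:fundIneq}, since Theorem~\ref{kFundIneqThm} is stated for strictly positive scalars, and (ii) the bookkeeping at the boundary values $q=0$, $q\ge n$, and $p=k$, where one must check that the inequality $\max_j b_j^2\le\tfrac{1}{n-q}\sum_j b_j^2$ is interpreted consistently.
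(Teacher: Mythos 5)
Your proposal is correct and follows essentially the same route as the paper: the forward direction is the computation underlying Proposition~\ref{rmk1} (deriving $(n-q)M\le B$ from the fundamental inequality for the post-surgery set), and the converse pads the subset with $q$ copies of $\sqrt{\max_j b_j^2}$ and checks \eqref{eq:fundIneq}, exactly as in the paper. Your extra care with nonnegative (possibly zero) replacement values and the boundary cases $q\ge n$, $p=k$ is a welcome tightening of details the paper leaves implicit, but it is not a different argument.
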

\begin{proof}
($\Rightarrow$) The set after $\left(p, q\right)$-length surgery on $\left\{a_i\right\}_{i=1}^{k}$  satisfies the desired inequality from Proposition \ref{rmk1} by letting $\Set{a_i}_{i=1}^q$ be the set of the numbers which are added to the set of \(k-p\) remaining numbers. 

($\Leftarrow$) Choose $ b_{k-p+i} := \gp{\max_{1\le j \le k-p}  b_j^2 }^{1/2}$ for \(i =1, \cdots, q\). \\
Then $ \max_{1\le j \le k-p}  b_j^2  \leq \frac{1}{n-q}\sum_{j=1}^{k-p} b_j^2 $ implies that 
$  (n-q) \max_{1\le j \le k-p}  b_j^2  \leq \sum_{j=1}^{k-p} b_j^2 $. By adding $ \sum_{j=1}^{q} b_{k-p+i}^2 $ to  both sides of this inequality, we obtain the fundamental inequality for $\{b_j\}_{i=1}^{k-p+q}$. 
\end{proof}

In the Appendix we provide an algorithm for computing the support of the null space of a matrix $A$. 
We note that this algorithm has an exponential complexity in the number of vectors in a frame.

\section*{Acknowlegement}
Copenhaver, Logan, Mayfield, Narayan, and Sheperd were supported by the NSF-REU Grant DMS 08-51321.
 Kim was supported by the Central Michigan University ORSP Early Career Investigator
(ECI) grant \#C61373. The authors thank M. Petro who provided the algorithm for computing the support of the null space of a matrix $A$.

\section*{Appendix}
\renewcommand{\algorithmiccomment}[1]{ \# #1}
\begin{algorithmic}
\State \Comment{Computes the support of the null space of an \(n\times m\) matrix $A$}
\State  \Comment{Initially invoke as \Call{support}{$A$,\(\Set{ }\),$m$}}
\State  \Comment{Output: a subset of the power set of \(\Set{1, \cdots, m}\)}

\Function {support}{$A$, $M$, $j$}
\If {$j=0$} \Comment{base case}
   \State {\bf let} $tA$ be the submatrix of $A$ consisting of the columns 
   \State     \qquad  with indices in $M$
   \State {\bf choose} $\Set{v_1, \cdots, v_l}$ spanning $null(tA)$ \Comment{using SVD}
   \If{$\ds \bigcup_{i=1}^l  supp(v_i) = \Set{1,  \cdots  ,|M|}$}
       \State  \Return $\Set{M}$
   \Else 
        \State  \Return $\Set{}$
   \EndIf
\Else \Comment{recursive case}
\State \Return \Call{support}{$A$,\(M\cup \Set{j} \), $j-1$} $\cup$ \Call{support}{$A$,\(M\),$j-1$}
\EndIf
\EndFunction
\end{algorithmic}

\bibliographystyle{amsplain}
\bibliography{References}

\providecommand{\bysame}{\leavevmode\hbox to3em{\hrulefill}\thinspace}
\providecommand{\MR}{\relax\ifhmode\unskip\space\fi MR }
\providecommand{\MRhref}[2]{%
  \href{http://www.ams.org/mathscinet-getitem?mr=#1}{#2}
}
\providecommand{\href}[2]{#2}
\begin{thebibliography}{10}

\bibitem{ACM12}
B.~Alexeev, J.~Cahill, and D.G. Mixon, \emph{Full {S}park {F}rames}, J. Fourier
  Anal. Appl. \textbf{18} (2012), no.~6, 1167--1194.

\bibitem{BF03}
J.J. Benedetto and M.~Fickus, \emph{Finite normalized tight frames}, Adv.
  Comput. Math. \textbf{18} (2003), 357--385.

\bibitem{redundancy}
B.G. Bodmann, P.G. Casazza, and G.~Kutyniok, \emph{Upper and lower redundancy
  of finite frames}, Conf. Inf. Sci. Syst. (2010).

\bibitem{physint}
P.G. Casazza, M.~Fickus, J.~Kova\v{c}evi\'{c}, M.T. Leon, and J.C. Tremain,
  \emph{A physical interpretation of finite frames}, Appl. Numer. Harmon. Anal.
  \textbf{2-3} (2006), 51--76.

\bibitem{erasures}
P.G. Casazza and J.~Kova\v{c}evi\'{c}, \emph{Equal-norm tight frames with
  erasures}, Adv. Comput. Math. \textbf{392} (2006), 387--430.

\bibitem{classes}
P.G. Casazza and N.~Leonhard, \emph{Classes of finite equal norm {P}arseval
  frames}, Frames and operator theory in analysis and signal processing,
  Contemp. Math., vol. 451, Amer. Math. Soc., Providence, RI, 2008, pp.~11--31.

\bibitem{REU11}
M.S. Copenhaver, Y.~Kim, C.~Logan, K.~Mayfield, S.K. Narayan, M.J. Petro, and
  J.~Sheperd, \emph{Diagram vectors and tight frame scaling in finite
  dimensions}, Preprint.

\bibitem{daubechies}
I.~Daubechies, \emph{Ten lectures on wavelets}, SIAM (Philadelphia, PA), 1992.

\bibitem{ffu}
H.~Deguang, K.~Kornelson, D.~Larson, and E.~Weber, \emph{Frames for
  undergraduates}, Student Mathematical Library, vol.~40, American Mathematical
  Society, Providence, RI, 2007.

\bibitem{duffin}
R.J. Duffin and A.C. Shaeffer, \emph{A class of nonharmonic {F}ourier series},
  Trans. Amer. Math. Soc. \textbf{72} (1952), 341--366.

\bibitem{FM12}
M.~Fickus and D.G. Mixon, \emph{Numerically erasure-robust frames}, Linear
  Algebra Appl. \textbf{437} (2012), 1394--1407.

\bibitem{HP04}
R.B. Holmes and V.I. Paulsen, \emph{Optimal frames for erasures}, Linear
  Algebra Appl. \textbf{377} (2004), 31--51.

\bibitem{LMO12}
J.~Lemvig, C.~Miller, and K.A. Okoudjou, \emph{Prime tight frames}, Preprint.

\bibitem{narayan}
S.K. Narayan, E.L. Radzwion, S.P. Rimer, R.L. Tomasino, J.L. Wolfe, and A.M.
  Zimmer, \emph{Robustness and surgery of frames}, Linear Algebra Appl.
  \textbf{434} (2011), 1893--1901.

\bibitem{nguyen}
N.Q. Nguyen, \emph{Surgery on frames}, Ph.D. thesis, Texas A\&M University,
  August 2008.

\bibitem{PK05}
M.~Puschel and J.~Kova\v{c}evi\'{c}, \emph{Real, tight frames with maximal
  robustness to erasures}, Proc. Data Compr. Conf. (2005), 63--72.

\bibitem{RS95}
A.~Ron and Z.~Shen, \emph{Frames and stable bases for shift-invariant subspaces
  of {$L_2(\bold R^d)$}}, Canad. J. Math. \textbf{47} (1995), no.~5,
  1051--1094.

\end{thebibliography}
\end{document}